\pdfoutput=1
\documentclass[11pt]{article}

\usepackage[a4paper,margin=28mm]{geometry}

\usepackage[T1]{fontenc}
\usepackage[utf8]{inputenc}
\usepackage{lmodern}
\usepackage{amsmath,amssymb,amsthm,mathtools}
\usepackage{bm}
\usepackage{mathrsfs}

\usepackage{graphicx}
\usepackage{booktabs}
\usepackage{array}
\usepackage{enumitem}
\usepackage{comment}

\usepackage{aliascnt}% robust theorem-type detection for cleveref across TeX distributions
\usepackage[colorlinks=true,linkcolor=blue,citecolor=blue,urlcolor=blue]{hyperref}
\usepackage[nameinlink,capitalize]{cleveref}

\theoremstyle{plain}
\newtheorem{theorem}{Theorem}[section]

\newaliascnt{lemma}{theorem}
\newtheorem{lemma}[lemma]{Lemma}
\aliascntresetthe{lemma}

\newaliascnt{proposition}{theorem}
\newtheorem{proposition}[proposition]{Proposition}
\aliascntresetthe{proposition}

\newaliascnt{corollary}{theorem}
\newtheorem{corollary}[corollary]{Corollary}
\aliascntresetthe{corollary}

\theoremstyle{definition}

\newaliascnt{definition}{theorem}
\newtheorem{definition}[definition]{Definition}
\aliascntresetthe{definition}

\newaliascnt{assumption}{theorem}

\aliascntresetthe{assumption}

\newaliascnt{example}{theorem}
\newtheorem{example}[example]{Example}
\aliascntresetthe{example}

\theoremstyle{remark}

\newaliascnt{remark}{theorem}
\newtheorem{remark}[remark]{Remark}
\aliascntresetthe{remark}

\crefname{assumption}{Assumption}{Assumptions}
\Crefname{assumption}{Assumption}{Assumptions}

\newcommand{\R}{\mathbb{R}}
\newcommand{\C}{\mathbb{C}}
\newcommand{\N}{\mathbb{N}}

\newcommand{\Tend}{\mathfrak{T}}          % terminal time

\newcommand{\norm}[1]{\left\lVert #1 \right\rVert}
\newcommand{\seminorm}[1]{\left\lvert #1 \right\rvert}

\newcommand{\dual}[2]{\left\langle #1,#2 \right\rangle}
\newcommand{\diff}{\mathop{}\!\mathrm{d}}
\newcommand{\Km}{K_{\mu}}
\newcommand{\Dm}{\mathcal{D}_{\mu}}
\newcommand{\Lp}{L^{2}(\Omega)^{d}}
\newcommand{\LpC}{L^{2}(\Omega;\C)^{d}}

\renewcommand{\Re}{\operatorname{Re}}
\renewcommand{\Im}{\operatorname{Im}}

\makeatletter

\@addtoreset{equation}{section}
\makeatother
\title{Coercivity structure of positive-type memory: exact gaps, critical horizons, and singular limits}

\author{
  Hiroki Ishizaka\\
  Team FEM, Matsuyama, Japan\\
  E-mail: \texttt{h.ishizaka005@gmail.com}
}

\date{}

\begin{document}

\maketitle

\begin{abstract}
We study diffusion equations with positive-type memory in the degenerate regime where the instantaneous diffusion may lose coercivity.  The basic question is simple: can a completely monotone memory term replace the missing $L^{2}(0,\Tend;V)$ coercivity? The answer is negative in the instantaneous energy space.  The obstruction is measured by the memory coercivity symbol $m$, defined through the Bernstein representation of the kernel and equal to $\operatorname{Re}\hat{k}$ whenever $k\in L^{1}(0,\infty)$. For kernels of finite $L^{1}$-mass, an exact frequency identity expresses the gap between the instantaneous energy and the memory dissipation as the spectral weight $1-m(\omega)$; provided that the memory form is non-trivial, the gap is non-negative for all states and all time horizons precisely when $\norm{k}_{L^{1}(0,\infty)}\leq1$.  At a fixed horizon, the threshold is instead the finite-horizon coercivity profile $\Lambda_{k}(\Tend)$, whose unit crossing defines a critical horizon and which applies also to kernels of infinite $L^{1}$-mass, including the fractional kernels.  For every locally integrable completely monotone kernel, however, $m(\omega)\to0$ as $|\omega|\to\infty$.  Therefore, positive-type memory is dissipative, but it is not frequency-uniformly coercive: no constant $c>0$ makes the memory dissipation dominate $c\int_{0}^{\Tend}a_{1}(u,u)$.  This is a no-go theorem, and we make the deficit quantitative through a coercivity-gap index $\rho\in[0,2]$, valid for every non-constant kernel.  Finally, the whole coercivity structure is discontinuous under weak-$*$ convergence of the associated time measures.  The graph-space well-posedness theory motivated by this no-go result, and the certified stability it targets, are developed in a companion paper.

\end{abstract}

\noindent\textbf{Keywords.}
diffusion equations with memory;
completely monotone kernels;
positive-type Volterra kernels;
memory coercivity symbol;
coercivity gap;
finite-horizon coercivity profile;
critical horizon;
weak-$*$ convergence of memory measures;
frequency-domain analysis

\medskip
\noindent\textbf{Mathematics Subject Classification 2020.}
Primary 45K05; Secondary 35K90, 42A38, 45M10, 26A48

%\tableofcontents

% ============================================================
\section{Introduction}
% ============================================================
Classical diffusion establishes a relationship between flux and the current gradient. In media characterised by \emph{memory}, this instantaneous relationship is substituted by one where the flux is influenced by the entire historical gradient, resulting in an evolution that couples a (potentially degenerate) elliptic operator with a historical term. Let $\Omega\subset\R^{d}$, $d\in\{1,2,3\}$, be a bounded Lipschitz domain and let $\Tend>0$. We examine the abstract problem:
\begin{align}\label{eq:model}
\displaystyle
\partial_{t}u(t)+\mathsf{A}_{0}u(t)+\mathsf{A}_{1}(k*u)(t)=f(t) \quad \text{in }V', \quad u(0)=u_{0},
\end{align}
for $t\in(0,\Tend]$. Here, $H := L^{2}(\Omega)$, and $V := H^{1}_{0}(\Omega)$ represents the energy space with $V'=H^{-1}(\Omega)$ as its dual. The inner product of $H$ is $(\cdot,\cdot)$ and the duality pairing of $V'$ and $V$ is $\dual{\cdot}{\cdot}$. By the Poincar\'e inequality $\norm{v}_{H} \leq C_{P}\norm{\nabla v}_{H}$, the seminorm $\seminorm{v}_{V}:=\norm{\nabla v}_{H}$ is a norm on $V$. We also write $\norm{v}_{V}^{2}:=\norm{v}_{H}^{2}+\seminorm{v}_{V}^{2}$ for the full $V$-norm; by the same inequality, $\seminorm{v}_{V}^{2}\leq\norm{v}_{V}^{2}\leq(1+C_{P}^{2})\seminorm{v}_{V}^{2}$, so the two norms are equivalent. The embeddings $V\hookrightarrow H\hookrightarrow V'$ are continuous and dense. The data satisfy $u_{0}\in H$ and $f\in L^{2}(0,\Tend;H)$. The operators $\mathsf{A}_{0},\mathsf{A}_{1} : V\to V'$ are derived from bounded symmetric coefficient fields, with associated bilinear forms $a_{i}(w,v)=\dual{\mathsf{A}_{i}w}{v}$, where the pairing is that of $V'$ and $V$. The term $(k*u)(t)=\int_{0}^{t}k(t-s)\,u(s)\diff s$ represents the temporal convolution with a \emph{completely monotone} relaxation kernel $k$. The specific functional framework, including the Poincar\'e inequality, the energy field, and the standing assumptions on the forms, is detailed in \cref{sec:prelim}. Equations of this nature are encountered in heat conduction with finite wave speed \cite{GurtinPipkin1968}, in material mechanics with fading memory \cite{Dafermos1970,AmendolaFabrizioGolden2012}, and---in the form motivating the degenerate regime studied herein---in the flow of viscoelastic fluids \cite{RenardyHrusaNohel1987,WangRenardy2011}, where the absence of solvent viscosity precisely corresponds to the vanishing of the instantaneous operator $\mathsf{A}_{0}$.

The mathematical character of \cref{eq:model} changes sharply with the instantaneous part. When $\mathsf{A}_{0}$ is coercive, the analysis is by now routine: $\mathsf{A}_{0}$ furnishes the $L^{2}(0,\Tend;V)$ control and the memory enters merely as a positive-type perturbation. The delicate regime is the \emph{degenerate} one, in which the coercivity constant $\alpha_{0}$ of $a_{0}$ tends to zero, $\alpha_{0} \downarrow 0$: then $\mathsf{A}_{0}$ provides no lower bound and the only dissipation left in the problem is the one stored in the history. It is tempting to hope that a positive-type memory might then step in and recover the coercivity that the instantaneous part no longer supplies. The purpose of this paper is to determine, precisely, the extent to which it can. The answer is negative in the instantaneous energy space, but not empty: a positive-type memory cannot replace the lost $L^{2}(0,\Tend;V)$ coercivity, but it does select a different norm. The task is then to identify that norm and to quantify its gap from the instantaneous one. Thus, the problem is not merely to prove existence. Existence for positive-type Volterra equations belongs to the classical theory. The point is to understand what the memory dissipation actually controls when the instantaneous coercivity disappears. This is a coercivity question, and its answer is encoded in one scalar symbol.

Our analysis rests on a single object, the \emph{memory coercivity symbol} $m$ of \cref{def:symbol}, defined through the Bernstein representation of the kernel and coinciding with $\Re\hat{k}$ (the real part of the Fourier transform of $k$) whenever $k\in L^{1}(0,\infty)$; this is the classical positive-definiteness quantity for Volterra kernels \cite{NohelShea1976,MacCamyWong1972,GripenbergLondenStaffans1990}. It yields an exact frequency identity (\cref{thm:gap}) for the gap between the instantaneous energy $\int_{0}^{\Tend}a_{1}(u,u)\,\diff t$ and the cumulative memory dissipation $\Dm[u](\Tend)$ (the energy absorbed by the history term up to time $\Tend$), expressed as a weighted integral of $1-m(\omega)$; provided that the memory form is non-trivial, the gap is non-negative for every state and every time horizon if and only if the unit-mass condition $\norm{k}_{L^{1}(0,\infty)} \leq 1$ holds. For the single exponential kernel---the borderline case $\norm{k}_{L^{1}(0,\infty)}=1$---this identity is exact and elementary (\cref{prop:exp}).

The symbol, however, decays to zero at high frequency for \emph{every} locally integrable completely monotone kernel (\cref{lem:symbol}): the memory controls the slowly varying part of the gradient and is blind to its rapidly oscillating part. We record this as a no-go theorem (\cref{thm:nogo})---there is no constant $c>0$ for which the memory dissipation dominates $c\int_{0}^{\Tend}a_{1}(u,u)$ uniformly, so no frequency-uniform coercivity can be extracted from positive-type memory---and we make the deficit quantitative through a \emph{coercivity-gap index} $\rho$, which lies in $[0,2]$ for every non-constant kernel (\cref{thm:index}): $\rho=2$ for a single relaxation time, $\rho=1-\alpha$ for the fractional kernel of order $\alpha$, and $\rho=0$ for slowly varying spectra. When the symbol is comparable with an algebraic weight, the index identifies the negative-order Sobolev scale of time-regularity that the memory dissipation controls.

At a fixed time horizon, the sign of the gap is governed not by the full mass but by a finite-horizon coercivity profile $\Lambda_{k}(\Tend)$, the norm of the memory quadratic form on $(0,\Tend)$; it is non-decreasing, tends to $\norm{k}_{L^{1}(0,\infty)}$ as $\Tend\to\infty$, and its unit crossing defines a critical horizon (\cref{thm:finite-profile,cor:critical-horizon}). This construction needs only local integrability and so covers the fractional kernels as well. The obstruction is sharp in one further respect: the high-frequency coercivity structure---the decay of the symbol and the index---is \emph{discontinuous} under weak-$*$ convergence of the associated time measures (\cref{thm:wstar}): along the weak-$*$ convergence
$ k_{n}(t)\,\diff t = n e^{-nt}\,\diff t \rightharpoonup^{*}\delta_{0}, $ the no-go result holds for every $n$, yet the limiting problem is instantaneously coercive and the index jumps from $2$ to $0$. The boundary between memory and instantaneous action is thus a genuine point of discontinuity for the coercivity scale.

None of this obstructs solvability; it selects the space in which to work. Related existence results for degenerate problems with positive-type memory are available within the classical abstract Volterra framework \cite{GripenbergLondenStaffans1990,Pruss1993,Zacher2009}. Therefore, the aim is not to solve the equation but to identify the space the memory selects. The structural diagnostics---the symbol and its high-frequency decay, the no-go theorem, the coercivity-gap index $\rho$, and the weak-$*$ discontinuity---are formulated for \emph{every} locally integrable completely monotone kernel, the fractional kernel of order $\alpha$ being a running example ($\rho=1-\alpha$, \cref{thm:index}); the exact gap identity and the unit-mass sign criterion (\cref{thm:gap}) are, by their Fourier derivation, stated for kernels of finite $L^{1}$-mass and so do \emph{not} cover the fractional kernel. The positive-type and frequency-domain machinery for Volterra kernels is classical \cite{NohelShea1976,MacCamyWong1972,GripenbergLondenStaffans1990,Pruss1993}, as is the diffusive (extended-variable) representation of memory \cite{Dafermos1970,AmendolaFabrizioGolden2012}; what is new here is the use of the symbol to \emph{quantify} the coercivity deficit (the index $\rho$, \cref{thm:index}) and the weak-$*$ discontinuity of the coercivity structure (\cref{thm:wstar}). This last point is worth setting against what is already known about robustness: convergence of solutions, and of attractors, under such singular kernel limits may remain continuous \cite{ContiPataSquassina2006,Shikhman2026}. Thus, weak-$*$ convergence of the associated time measures may preserve solution convergence while destroying the coercivity scale selected by the memory dissipation. The graph-space well-posedness theory motivated by the present no-go result, based on a graph norm built from the internal-variable energy identified here, is developed in a companion paper \cite{Ishizaka2026Graph}. A separate companion study \cite{Ishizaka2026S2} develops well-posedness and kernel-stability theory for diffusion with mixed measure-valued memory in the coercive regime; the present paper isolates the prior structural question of how much coercivity the memory itself supplies, and in particular treats the degenerate regime in which the instantaneous diffusion is absent.

\cref{sec:prelim} fixes the functional setting and the internal-variable representation of the memory dissipation. \cref{sec:positive} establishes the positive-type identity and the exact exponential-kernel gap. \cref{sec:symbol} introduces the symbol and proves the exact gap identity, the finite-horizon coercivity profile, and the no-go theorem. \cref{sec:structure} develops the coercivity-gap index and the weak-$*$ discontinuity. A final section collects concluding remarks.

% ============================================================
\section{Notation and preliminaries}\label{sec:prelim}
% ============================================================
Let $A_{0},A_{1}\in L^{\infty}(\Omega)^{d\times d}$ be symmetric matrix fields and define
\begin{align*}
\displaystyle
  a_{i}(w,v):=\int_{\Omega}A_{i}(x)\nabla w\cdot\nabla v\diff x, \quad i\in\{0,1\}.
\end{align*}
We recall $\mathsf{A}_{i}\colon V\to V'$ for the spatial operator induced by the form, $\dual{\mathsf{A}_{i}w}{v}=a_{i}(w,v)$ ($i=0,1$); thus $\mathsf{A}_{i}$ is generated by the coefficient field $A_{i}$, which we keep distinct in notation. When $d=1$, the coefficient fields $A_{i}$ are scalar functions, $L^{2}(\Omega)^{d}$ is identified with $L^{2}(\Omega)$, and $\nabla u$ is simply $u'$; all matrix products below then reduce to ordinary multiplication. We assume throughout that the forms $a_{0},a_{1}$ are bounded and that the coefficient field $A_{1}$ is symmetric and pointwise positive semidefinite for almost every $x\in\Omega$, written $A_{1}\succeq0$; explicitly, $A_{1}(x)=A_{1}(x)^{\top}$ and $\xi^{\top}A_{1}(x)\,\xi \geq 0$ for every $\xi\in\R^{d}$ and almost every $x\in\Omega$. Because the matrix square root is a continuous function on symmetric positive-semidefinite matrices, this defines a symmetric, bounded, measurable field $A_{1}^{1/2}\in L^{\infty}(\Omega)^{d\times d}$ with $\left(A_{1}^{1/2}(x)\right)^{2}=A_{1}(x)$ for almost every $x$; in particular
\begin{align*}
\displaystyle
a_{1}(v,v)=\int_{\Omega}A_{1}\nabla v\cdot\nabla v\diff x =\int_{\Omega}\left\lvert A_{1}^{1/2}\nabla v\right\rvert^{2}\diff x \geq 0 \quad \forall v\in V,
\end{align*}
so the non-negativity of $a_{1}$ is a consequence of $A_{1}\succeq0$ rather than a separate hypothesis. We do \emph{not} assume coercivity of $a_{0}$ unless it is stated explicitly. The associated energy field of a function $u\colon(0,\Tend]\to V$ is
\begin{align}\label{eq:Wfield}
\displaystyle
W(t):=A_{1}^{1/2}\nabla u(t)\in\Lp, \quad a_{1}\left(u(s),u(t)\right)=\left(W(s),W(t)\right),
\end{align}
where $(\cdot,\cdot)$ and $\norm{\cdot}$ denote, here and below, the inner product and norm of $\Lp$; the second identity in \cref{eq:Wfield} is the polarisation of $a_{1}(v,v)=\norm{A_{1}^{1/2}\nabla v}^{2}$ and uses the symmetry of $A_{1}^{1/2}$. In particular $a_{1}(u(t),u(t))=\norm{W(t)}^{2}$.

\subsection{Completely monotone kernels}
We restrict attention to \emph{locally integrable} completely monotone kernels: completely monotone densities $k$ with $k\in L^{1}(0,\Tend)$ for every $\Tend>0$, equivalently with representing measure $\nu$ satisfying $\int_{[1,\infty)}\lambda^{-1}\diff\nu(\lambda)<\infty$. The associated measure is $\diff\mu(s)=k(s)\diff s$ on $(0,\Tend]$. This class contains the fractional kernels $t^{-\alpha}/\Gamma(1-\alpha)$, $\alpha\in(0,1)$, while excluding non-integrable completely monotone densities such as $k(t)=1/t$. Here, $\Gamma$ denotes the Gamma function, $\Gamma(z) := \int_0^{\infty} t^{z-1} e^{-t} \diff t$ for $z > 0$. We assume throughout that the kernel is non-trivial, $k\not\equiv0$; equivalently, its representing measure satisfies $\nu\neq0$.

\begin{definition}[Completely monotone kernel]\label{def:cm}
A function $k\colon(0,\infty)\to[0,\infty)$ is \emph{completely monotone} if it is of class $C^{\infty}$ and $(-1)^{n}k^{(n)}(t) \geq 0$ for all $t>0$ and $n\in\N_{0}$. By Bernstein's theorem \cite[Thm.~1.4]{SchillingSongVondracek2012}, this holds if and only if there exists a non-negative Borel measure $\nu$ on $[0,\infty)$ such that
\begin{align}\label{eq:bernstein}
\displaystyle
k(t)=\int_{[0,\infty)} e^{-\lambda t}\diff\nu(\lambda), \quad t>0.
\end{align}
We call $\nu$ the representing measure of $k$ in Bernstein's theorem. The total mass of the kernel is $\norm{k}_{L^{1}(0,\infty)}=\int_{0}^{\infty}k(t)\diff t =\int_{[0,\infty)}\lambda^{-1}\diff\nu(\lambda)\in(0,\infty]$, by Tonelli's theorem.
\end{definition}

\begin{example}[Standard cases]\label{ex:kernels}
\leavevmode
\begin{itemize}[leftmargin=1.6em]
  \item \emph{Exponential (single relaxation time):} $k(t)=\gamma e^{-\gamma t}$ with $\gamma>0$. Then, $\nu=\gamma\,\delta_{\gamma}$ and $\norm{k}_{L^{1}(0,\infty)}=1$.
  \item \emph{Fractional:} $k(t)=t^{-\alpha}/\Gamma(1-\alpha)$ with $\alpha\in(0,1)$. Then, $\diff\nu_{\alpha}(\lambda) =\tfrac{\sin(\pi\alpha)}{\pi}\lambda^{\alpha-1}\diff\lambda$ and
        $\norm{k}_{L^{1}(0,\infty)}=\infty$; see, e.g., \cite{GorenfloKilbasMainardiRogosin2014} for the fractional-calculus background.
\end{itemize}
\end{example}

\begin{example}[Prony kernel / finite relaxation spectrum]\label{ex:prony}
Let $k(t)=\sum_{j=1}^{J}c_{j}e^{-\gamma_{j}t}$ with $c_{j}>0$ and distinct $\gamma_{j}>0$. Then, $k$ is completely monotone with representing measure $\nu=\sum_{j=1}^{J}c_{j}\delta_{\gamma_{j}}$, and
\begin{align*}
\displaystyle
&M_{0}=\sum_{j}c_{j}=k(0^{+}) := \lim_{t \downarrow 0} k(t),\quad
M_{1}=\sum_{j}c_{j}\gamma_{j}=-k'(0^{+}) := -  \lim_{t \downarrow 0} k'(t), \\
&\norm{k}_{L^{1}(0,\infty)}=\sum_{j}\frac{c_{j}}{\gamma_{j}}=m(0) \ \text{with }
  m(\omega)=\sum_{j}\frac{c_{j}\gamma_{j}}{\gamma_{j}^{2}+\omega^{2}}.
\end{align*}
Provided that $a_1\not\equiv0$, \cref{thm:gap} shows that the exact gap is non-negative for every state and every time horizon if and only if
\begin{align*}
\displaystyle
\sum_j \frac{c_j}{\gamma_j}\leq1,
\end{align*}
and $m(\omega)=O(\omega^{-2})\to0$ (\cref{lem:symbol}), which is the high-frequency decay underlying \cref{thm:nogo}. The single-exponential kernel of \cref{ex:kernels} is the case $J=1$, $c_{1}=\gamma_{1}=\gamma$, for which $M_{0}=\gamma$, $M_{1}=\gamma^{2}$, and $\norm{k}_{L^{1}(0,\infty)}=1$ (the borderline of \cref{thm:gap}). This example illustrates that the zeroth Bernstein moment $M_{0}=\sum_{j}c_{j}$ and the $L^{1}$ mass $\norm{k}_{L^{1}(0,\infty)}=\sum_{j}c_{j}/\gamma_{j}$ are genuinely different quantities.
\end{example}

Because $k$ is locally integrable, $\mu$ is finite on $(0,\Tend]$: $\mu((0,\Tend])=\int_{0}^{\Tend}k(s)\diff s<\infty$. We work with zero prehistory, so that $u$ is extended by zero to negative times, and the memory operator is
\begin{align}\label{eq:Kmu}
\displaystyle
\dual{(\Km u)(t)}{v}:=\int_{0}^{t}k(t-s)\,a_{1}\left(u(s),v\right)\diff s,
  \quad v\in V,
\end{align}
and the cumulative memory dissipation is
\begin{subequations} \label{eq:Dmu}
\begin{align}
\displaystyle
\Dm[u](\Tend) &:=\int_{0}^{\Tend}\dual{(\Km u)(t)}{u(t)}\diff t
  =\int_{0}^{\Tend}\left((k*W)(t),\,W(t)\right)\diff t, \label{eq:Dmu=a} \\
 (k*W)(t) &:=\int_{0}^{t}k(t-s)W(s)\diff s, \label{eq:Dmu=b}
\end{align}
\end{subequations}
where the second equality uses \cref{eq:Wfield}.

\subsection{Internal variables}
For each $\lambda \geq 0$ define the internal variable
\begin{align*}
\displaystyle
Z(\lambda,t):=\int_{0}^{t} e^{-\lambda(t-s)}W(s)\diff s,
\end{align*}
which leads to
\begin{align}\label{eq:internal}
\partial_{t}Z(\lambda,t)+\lambda Z(\lambda,t)=W(t), \quad Z(\lambda,0)=0.
\end{align}
By \cref{eq:bernstein} and Fubini's theorem,
\begin{align}\label{eq:conv-internal}
(k*W)(t)=\int_{[0,\infty)} Z(\lambda,t)\diff\nu(\lambda)
  \quad\text{in }\Lp.
\end{align}
This diffusive (extended-variable) representation is classical in the theory of materials with memory \cite{Dafermos1970,AmendolaFabrizioGolden2012}; we use it as the bridge between the time domain and the frequency domain.

% ============================================================
\section{Positive-type dissipation and the exponential identity}\label{sec:positive}
% ============================================================
The internal-variable formula shows what the memory actually controls.  By the Bernstein representation, the memory is a superposition of relaxation modes.  Each mode $Z(\lambda,\cdot)$ contributes the non-negative quantity
\begin{align*}
\displaystyle
\frac12\|Z(\lambda,\Tend)\|_{\Lp}^2 +\lambda\int_0^{\Tend} \|Z(\lambda,t)\|_{\Lp}^2\, \diff t .
\end{align*}
Therefore, the memory dissipation is positive.  However, this is not the instantaneous energy $\int_0^{\Tend} a_1(u,u)\, \diff t$.  It is an energy of the internal variables, and it is this quantity that motivates the memory graph norm used in the companion paper.

\begin{lemma}[Internal-variable representation]\label{lem:posrep}
For any $u$ with $W\in L^{2}(0,\Tend;\Lp)$,
\begin{align}\label{eq:posrep}
\displaystyle
\Dm[u](\Tend)
= \int_{[0,\infty)}\left[\frac{1}{2}\norm{Z(\lambda,\Tend)}_{\Lp}^{2}
     +\lambda\int_{0}^{\Tend}\norm{Z(\lambda,t)}_{\Lp}^{2}\diff t \right]\diff\nu(\lambda)
  \ \geq \ 0.
\end{align}
In particular, the kernel is of positive type. The identity depends on $u$ only through the field $W$: by \cref{eq:Dmu=a}, and with $Z$ given by the same formula, it holds verbatim for every $W\in L^{2}(0,\Tend;\Lp)$.
\end{lemma}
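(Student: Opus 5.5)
The plan is to substitute the internal-variable representation \cref{eq:conv-internal} into the definition \cref{eq:Dmu=a} and to evaluate the dissipation mode by mode. First, fix $\lambda\geq0$ and $W\in L^{2}(0,\Tend;\Lp)$. The mode $Z(\lambda,\cdot)$ is the solution of \cref{eq:internal}, so it lies in $H^{1}(0,\Tend;\Lp)$, with $\partial_{t}Z=W-\lambda Z$. We take the $\Lp$ inner product of \cref{eq:internal} with $Z(\lambda,t)$ and use $\tfrac{\diff}{\diff t}\tfrac12\norm{Z}^{2}=(\partial_{t}Z,Z)$, which holds because $Z$ is absolutely continuous with values in a Hilbert space. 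Integrating over $(0,\Tend)$ and using $Z(\lambda,0)=0$ then gives the single-mode identity
\begin{align*}
\int_{0}^{\Tend}\left(Z(\lambda,t),W(t)\right)\diff t
=\frac12\norm{Z(\lambda,\Tend)}^{2}+\lambda\int_{0}^{\Tend}\norm{Z(\lambda,t)}^{2}\diff t .
\end{align*}

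Next, we integrate this identity against $\diff\nu(\lambda)$. The left-hand side becomes $\int_{0}^{\Tend}\left(\int Z(\lambda,t)\diff\nu(\lambda),W(t)\right)\diff t$, which by \cref{eq:conv-internal} equals $\Dm[u](\Tend)$. The right-hand side is a non-negative integrand, so its $\nu$-integral is well defined in $[0,\infty]$. Non-negativity of \cref{eq:posrep}, and hence the positive-type property, then follows at once. The last sentence of the lemma is automatic, because every step used only $W$.

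The main obstacle is justifying the interchange of $\int\diff\nu$ with $\int_{0}^{\Tend}\diff t$ and the inner product; $\nu$ may be infinite, as for the fractional kernel. We will use Fubini on $[0,\infty)\times(0,\Tend)$ for the function $(\lambda,t)\mapsto(Z(\lambda,t),W(t))$, and this needs integrability of its absolute value. Since $\norm{Z(\lambda,t)}\leq\int_{0}^{t}e^{-\lambda(t-s)}\norm{W(s)}\diff s$, we get
\begin{align*}
\int_{[0,\infty)}\int_{0}^{\Tend}\seminorm{(Z(\lambda,t),W(t))}\diff t\diff\nu(\lambda)
\leq\int_{0}^{\Tend}\int_{0}^{t}k(t-s)\norm{W(s)}\norm{W(t)}\diff s\diff t ,
\end{align*}
where we have used Tonelli and \cref{eq:bernstein}. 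By Young's inequality this is at most $\norm{k}_{L^{1}(0,\Tend)}\norm{W}_{L^{2}(0,\Tend;\Lp)}^{2}<\infty$, thanks to local integrability of $k$. Fubini is therefore justified, \cref{eq:conv-internal} holds as a Bochner integral for a.e.\ $t$, and both sides of \cref{eq:posrep} are finite.
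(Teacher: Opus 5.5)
Your proof is correct, but it takes a more direct route than the paper's.

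The paper never applies Fubini against the full, possibly infinite, measure $\nu$. It truncates to $\nu_{\Lambda}=\nu|_{[0,\Lambda]}$, which is finite, so the crude bound $\norm{Z(\lambda,t)}\leq\Tend^{1/2}\norm{W}_{L^{2}(0,\Tend;\Lp)}$ suffices to interchange the integrals. It then proves the identity for $k_{\Lambda}$ and lets $\Lambda\to\infty$ with two separate limit arguments. On the dissipation side it uses dominated convergence for $k_{\Lambda}\to k$ in $L^{1}(0,\Tend)$ together with Young's inequality. On the non-negative modal side it uses monotone convergence.

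You instead dominate $\seminorm{(Z(\lambda,t),W(t))}$ pointwise and use Tonelli with \cref{eq:bernstein} to collapse the $\nu$-integral back into $k(t-s)$. Local integrability of $k$ alone then gives absolute integrability with respect to $\nu\otimes\diff t$, and a single application of Fubini does the job. This buys two things:
\begin{itemize}
\item a shorter proof in which finiteness of the right-hand side of \cref{eq:posrep} is immediate, rather than inherited from a limit;
\item a justification of \cref{eq:conv-internal} as a Bochner integral for a.e.\ $t$, via $k*\norm{W(\cdot)}_{\Lp}\in L^{2}(0,\Tend)$ being finite a.e. The paper states this in \cref{sec:prelim} without spelling out the domination.
\end{itemize}
In exchange, the paper's approximation scheme works only with finite measures at every stage. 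It exhibits the identity as a limit of identities for kernels with finite representing measure.

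Two details you leave implicit are routine. Joint measurability of $(\lambda,t)\mapsto(Z(\lambda,t),W(t))$ holds because the map is continuous in $\lambda$ and measurable in $t$. Moving the inner product through the Bochner integral is standard.
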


\begin{proof}
Because $k\in L^{1}(0,\Tend)$ and $W\in L^{2}(0,\Tend;\Lp)$, Young's inequality gives $k*W\in L^{2}(0,\Tend;\Lp)$, so $\Dm[u](\Tend)$ is well defined and finite. We prove the identity for the truncated measure $\nu_{\Lambda}:=\nu|_{[0,\Lambda]}$, with kernel $k_{\Lambda}(t):=\int_{[0,\Lambda]}e^{-\lambda t}\diff\nu(\lambda)$, and then let $\Lambda\to\infty$.

The measure $\nu_{\Lambda}$ is finite: for any fixed $\tau>0$, $e^{-\Lambda\tau}\nu([0,\Lambda])\leq\int_{[0,\Lambda]}e^{-\lambda\tau}\diff\nu(\lambda)\leq k(\tau)<\infty$. Moreover $k_{\Lambda}(t)\uparrow k(t)$ pointwise by monotone convergence in $\lambda$, so $0\leq k(t)-k_{\Lambda}(t)\leq k(t)$ with $k\in L^{1}(0,\Tend)$, and dominated convergence gives $\norm{k-k_{\Lambda}}_{L^{1}(0,\Tend)}\to0$.

Fix $\lambda\geq0$. Since $e^{-\lambda\cdot}\in L^{1}(0,\Tend)$, Young's inequality gives $Z(\lambda,\cdot)\in L^{2}(0,\Tend;\Lp)$, and $\partial_{t}Z(\lambda,\cdot)=W-\lambda Z(\lambda,\cdot)\in L^{2}(0,\Tend;\Lp)$, so $Z(\lambda,\cdot)\in H^{1}(0,\Tend;\Lp)$ solves \cref{eq:internal} in the $H^{1}$-trace sense. Because $\nu_{\Lambda}$ is finite and $\norm{Z(\lambda,t)}_{\Lp}\leq\Tend^{1/2}\norm{W}_{L^{2}(0,\Tend;\Lp)}$, Fubini's theorem and $W=\partial_{t}Z+\lambda Z$ (using $Z(\lambda,0)=0$) give
\begin{align*}
D_{\mu_{\Lambda}}[u](\Tend)
&=\int_{[0,\Lambda]}\int_{0}^{\Tend}\left(Z(\lambda,t),\partial_{t}Z(\lambda,t)+\lambda Z(\lambda,t)\right)_{\Lp}\diff t\,\diff\nu(\lambda)\\
&=\int_{[0,\Lambda]}\left[\frac{1}{2}\norm{Z(\lambda,\Tend)}_{\Lp}^{2}+\lambda\int_{0}^{\Tend}\norm{Z(\lambda,t)}_{\Lp}^{2}\diff t\right]\diff\nu(\lambda).
\end{align*}
Finally, $k_{\Lambda}\to k$ in $L^{1}(0,\Tend)$ and Young's inequality give
\begin{align*}
|\Dm[u](\Tend)-D_{\mu_{\Lambda}}[u](\Tend)|
\leq\norm{k-k_{\Lambda}}_{L^{1}(0,\Tend)}\norm{W}_{L^{2}(0,\Tend;\Lp)}^{2}\to0,
\end{align*}
while the integrand above is non-negative, so the monotone convergence theorem passes $\Lambda\to\infty$ on the right. Combining the two limits yields \cref{eq:posrep}. The right-hand side is non-negative, and therefore $D_{\mu}[u](\Tend) \geq 0$.
\end{proof}

%The previous identity holds for an arbitrary completely monotone kernel and is expressed as an integral with respect to the representing measure. For a single-exponential kernel, this integral reduces to a single relaxation mode. This case is also the normalised borderline $\|k \|_{L^{1}(0,\infty)}=1$, and the difference between the instantaneous energy and the memory dissipation can be computed directly in the time domain.

For a single exponential kernel, the representing measure is atomic, so the modal identity of \cref{lem:posrep} reduces to a one-mode formula.  In the normalised case $\|k\|_{L^{1}(0,\infty)}=1$, this gives an explicit time-domain identity for the coercivity gap.

\begin{proposition}[Exponential gap identity]\label{prop:exp}
Let $k(t)=\gamma e^{-\gamma t}$ with $\gamma>0$, and set $Z(t):=Z(\gamma,t)$. Then, for every $u$ with $W\in L^{2}(0,\Tend;\Lp)$,
\begin{align}\label{eq:expgap}
\int_{0}^{\Tend} a_{1}\left(u(t),u(t)\right)\diff t-\Dm[u](\Tend)
= \int_{0}^{\Tend}\norm{\partial_{t}Z(t)}_{\Lp}^{2}\diff t +\frac{\gamma}{2}\,\norm{Z(\Tend)}_{\Lp}^{2}
  \ \geq \ 0.
\end{align}
\end{proposition}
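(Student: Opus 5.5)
We verify the identity directly in the time domain, using the single relaxation mode $Z(t)=Z(\gamma,t)$. Here $\nu=\gamma\delta_{\gamma}$, so \cref{eq:conv-internal} reduces to $(k*W)(t)=\gamma Z(t)$. Hence, by \cref{eq:Dmu=a} and \cref{eq:Wfield},
\begin{align*}
\int_{0}^{\Tend}a_{1}(u(t),u(t))\diff t-\Dm[u](\Tend)=\int_{0}^{\Tend}\left(\norm{W(t)}^{2}-\gamma\,(Z(t),W(t))\right)\diff t .
\end{align*}
As in the proof of \cref{lem:posrep}, $Z\in H^{1}(0,\Tend;\Lp)$ with $Z(0)=0$ and $W=\partial_{t}Z+\gamma Z$ by \cref{eq:internal}. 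We substitute this expression for $W$ into the integrand.

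Expanding gives
\begin{align*}
\norm{W}^{2}-\gamma(Z,W)=\norm{\partial_{t}Z}^{2}+2\gamma(\partial_{t}Z,Z)+\gamma^{2}\norm{Z}^{2}-\gamma(Z,\partial_{t}Z)-\gamma^{2}\norm{Z}^{2}=\norm{\partial_{t}Z}^{2}+\gamma(\partial_{t}Z,Z).
\end{align*}
The $\gamma^{2}\norm{Z}^{2}$ terms cancel exactly; this cancellation is where the normalisation $\norm{k}_{L^{1}(0,\infty)}=1$ enters. For the cross term we use the $H^{1}$ chain rule $\int_{0}^{\Tend}(\partial_{t}Z,Z)\diff t=\tfrac12\norm{Z(\Tend)}^{2}-\tfrac12\norm{Z(0)}^{2}$, which holds for $H^{1}(0,\Tend;\Lp)$ functions with values in a Hilbert space. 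Since $Z(0)=0$, integrating over $(0,\Tend)$ yields \cref{eq:expgap}. Both terms on the right are manifestly non-negative.

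The only point needing care is justification rather than algebra. We must confirm that every integrand is integrable, which follows because $Z,\partial_{t}Z,W\in L^{2}(0,\Tend;\Lp)$. We must also confirm that the absolutely continuous representative of $Z$ is used when evaluating $Z(\Tend)$. Both facts are already established in the proof of \cref{lem:posrep}.

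As a consistency check, we can instead start from \cref{lem:posrep}, which gives $\Dm=\tfrac{\gamma}{2}\norm{Z(\Tend)}^{2}+\gamma^{2}\int_{0}^{\Tend}\norm{Z}^{2}\diff t$, and from the expansion $\int_{0}^{\Tend}\norm{W}^{2}\diff t=\int_{0}^{\Tend}\norm{\partial_{t}Z}^{2}\diff t+\gamma\norm{Z(\Tend)}^{2}+\gamma^{2}\int_{0}^{\Tend}\norm{Z}^{2}\diff t$. Subtracting the first from the second again gives \cref{eq:expgap}.
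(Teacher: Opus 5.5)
Your proposal is correct and is essentially the paper's proof. The paper computes $\Dm[u](\Tend)$ from \cref{lem:posrep}, expands $\int_{0}^{\Tend}\norm{W}^{2}\diff t$ using $W=\partial_{t}Z+\gamma Z$ and $2\int_{0}^{\Tend}(\partial_{t}Z,Z)\diff t=\norm{Z(\Tend)}^{2}$, and then subtracts; this is exactly your ``consistency check''. Your main route performs the same substitution pointwise in the integrand before integrating, so the two arguments differ only in bookkeeping.
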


\begin{proof}
Because $W\in L^{2}(0,\Tend;\Lp)$, the function $Z(t)=\int_{0}^{t}e^{-\gamma(t-s)}W(s)\,\diff s$ belongs to $H^{1}(0,\Tend;\Lp)$ and satisfies $\partial_t Z+\gamma Z=W$, $Z(0)=0$. Furthermore, for $k(t)=\gamma e^{-\gamma t}$, we have $(k*W)(t)=\gamma Z(t)$. By \cref{lem:posrep},
\begin{align*}
\displaystyle
\Dm[u](\Tend)
=  \gamma \left[ \frac{1}{2}\norm{Z(\Tend)}_{\Lp}^{2}
     +\gamma \int_{0}^{\Tend}\norm{Z(t)}_{\Lp}^{2}\diff t \right].
\end{align*}
On the other hand,
\begin{align*}
\displaystyle
&\int_{0}^{\Tend}\norm{W(t)}_{\Lp}^{2} \diff t \\
&\quad =\int_{0}^{\Tend}\norm{\partial_{t} Z(t)}_{\Lp}^{2} \diff t
   +2\gamma\int_{0}^{\Tend}\left(\partial_{t} Z (t) , Z (t) \right)_{\Lp} \diff t
   +\gamma^{2}\int_{0}^{\Tend}\norm{Z(t)}_{\Lp}^{2}\diff t \\
&\quad = \int_{0}^{\Tend}\norm{\partial_{t} Z(t)}_{\Lp}^{2} \diff t 
   +\gamma\norm{Z(\Tend)}_{\Lp}^{2}
   +\gamma^{2}\int_{0}^{\Tend}\norm{Z(t)}_{\Lp}^{2}\diff t,
\end{align*}
where we used $2\int_{0}^{\Tend}(\partial_{t}Z,Z)=\norm{Z(\Tend)}^{2}$. Subtracting and recalling 
\begin{align*}
\displaystyle
\int_{0}^{\Tend}a_{1}(u(t),u(t)) \diff t = \int_{0}^{\Tend}\norm{W(t)}_{\Lp}^{2} \diff t
\end{align*}
gives \cref{eq:expgap}.
\end{proof}

% ============================================================
\section{The memory coercivity symbol and the no-go theorem}\label{sec:symbol}
% ============================================================
The internal-variable representation of \cref{sec:positive} shows that the memory dissipation is always non-negative, but it does not reveal which part of the energy is actually controlled. To answer this question, we now move to the frequency domain and introduce the memory coercivity symbol $m(\omega)$, which measures the frequency-dependent strength of the memory. This symbol leads to an exact representation of the coercivity gap, identifies the sharp unit-mass threshold for its sign, and finally yields the no-go theorem showing that positive-type memory alone cannot provide frequency-uniform coercivity.

\subsection{The symbol and its qualitative properties}
The positive-type identity of \cref{sec:positive} shows that the memory dissipation is a real quadratic form in the history field $W$. To understand which temporal frequencies this quadratic form controls, we pass to the frequency domain. If $W$ is extended by zero outside $(0,\Tend)$, then the memory operator is represented by multiplication by the causal Fourier transform of $k$. Because the dissipation is real and quadratic in $W$, only the real part of this multiplier contributes. The imaginary part is skew-symmetric in frequency and cancels after integration over $\mathbb{R}$.

We regard $k$ as a ``causal'' kernel by extending it by zero to negative times. Its Fourier transform is therefore
\begin{align*}
\displaystyle
\hat{k}(\omega)
:=
\int_0^\infty k(t)e^{-i\omega t}\,\diff t .
\end{align*}
Using \cref{eq:bernstein} and formally interchanging the order of integration, we obtain
\begin{align*}
\displaystyle
\hat{k}(\omega)
=
\int_{[0,\infty)}
\int_0^\infty e^{-(\lambda+i\omega)t}\,\diff t\,\diff\nu(\lambda)
=
\int_{[0,\infty)}
\frac{1}{\lambda+i\omega}\,\diff\nu(\lambda).
\end{align*}
Because
\begin{align*}
\displaystyle
\operatorname{Re}
\frac{1}{\lambda+i\omega}
=
\frac{\lambda}{\lambda^2+\omega^2},
\end{align*}
the real part of the Fourier multiplier is
\begin{align*}
\displaystyle
\operatorname{Re}\hat{k}(\omega)
=
\int_{[0,\infty)}
\frac{\lambda}{\lambda^2+\omega^2}\,\diff\nu(\lambda).
\end{align*}
This quantity measures the effective frequency-by-frequency coercivity supplied by the memory and motivates the following definition.

\begin{definition}[Memory coercivity symbol]\label{def:symbol}
The memory coercivity symbol of a completely monotone kernel $k$ with representing measure $\nu$ is
\begin{align}\label{eq:symbol}
m(\omega)
:=
\int_{[0,\infty)}
\frac{\lambda}{\lambda^2+\omega^2}\,\diff\nu(\lambda),
\quad
\omega\in\mathbb{R}.
\end{align}
When $k\in L^{1}(0,\infty)$, the causal Fourier transform $\hat k$ is well defined and
\begin{align*}
\displaystyle
m(\omega)=\operatorname{Re}\hat{k}(\omega).
\end{align*}
For $\omega\neq0$, the integral converges: the integrand is bounded on $[0,1]$ and dominated by $\lambda^{-1}$ on $[1,\infty)$, and $\int_{[1,\infty)}\lambda^{-1}\diff\nu(\lambda)<\infty$ for every locally integrable kernel. At $\omega=0$, the integrand is read as $\lambda^{-1}$, with the convention $0^{-1}:=+\infty$; thus
\begin{align*}
\displaystyle
m(0)
=
\int_{[0,\infty)}
\lambda^{-1}\diff\nu(\lambda)
\in(0,\infty],
\end{align*}
and, in particular, $m(0)=+\infty$ whenever $\nu(\{0\})>0$.
Indeed, when $\norm{k}_{L^{1}(0,\infty)}<\infty$, the integral defining $\hat k$ converges absolutely, the interchange of integrals above is justified by Fubini's theorem, and the identity holds exactly. For kernels of infinite $L^{1}$-mass, $\hat k$ need not exist as a Lebesgue integral, and \cref{eq:symbol} is taken as the definition of the symbol; the computation above is then a formal motivation. This identification is never used beyond the finite-mass setting: \cref{prop:freq} and \cref{thm:gap} assume $\norm{k}_{L^{1}(0,\infty)}<\infty$, and the no-go argument of \cref{lem:scalar} and \cref{thm:nogo} proceeds through the internal-variable representation and avoids the Fourier transform of $k$ altogether.
This is the classical positive-definiteness quantity for Volterra kernels \cite{NohelShea1976,MacCamyWong1972,GripenbergLondenStaffans1990}.
\end{definition}

The next lemma records the basic qualitative behaviour of the symbol. Each relaxation mode contributes the factor
\begin{align*}
\displaystyle
\frac{\lambda}{\lambda^{2}+\omega^{2}},
\end{align*}
which is largest at zero frequency and decreases as $\lvert\omega\rvert$ increases. Consequently, the memory acts most strongly on slowly varying components. At zero frequency, the symbol recovers the total $L^{1}$-mass of the kernel, whereas away from zero it remains finite even when this mass is infinite. Most importantly, the symbol vanishes at high frequencies; this decay is the basic mechanism behind the failure of frequency-uniform coercivity proved later.

\begin{lemma}[Monotonicity and decay]\label{lem:symbol}
The symbol $m$ is even, finite for every $\omega\neq0$, non-increasing as a function of $|\omega|$, and satisfies
\begin{align*}
\displaystyle
\sup_{\omega\in\mathbb R}m(\omega)
= m(0)
= \|k\|_{L^1(0,\infty)}
= \int_{[0,\infty)}\lambda^{-1}\,\diff\nu(\lambda) \in(0,\infty].
\end{align*}
Furthermore,
\begin{align*}
\displaystyle
\lim_{|\omega|\to\infty}m(\omega)=0 .
\end{align*}
The decay holds for every locally integrable completely monotone kernel, including kernels of infinite $L^1$-mass.
\end{lemma}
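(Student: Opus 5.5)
The plan is to argue pointwise in $\lambda$ on the integrand $\phi_\omega(\lambda):=\lambda/(\lambda^{2}+\omega^{2})$ of \cref{eq:symbol}, and then pass to $m$ by monotone and dominated convergence.

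\emph{Evenness and monotonicity.} Evenness is immediate because $\phi_\omega$ depends on $\omega$ only through $\omega^{2}$. For fixed $\lambda\ge0$ the map $|\omega|\mapsto\phi_\omega(\lambda)$ is non-increasing. At $\lambda=0$ it is identically $0$ for $\omega\neq0$, and it equals $+\infty$ at $\omega=0$ under the stated convention. Since $\nu\ge0$, integrating this pointwise inequality shows that $m$ is non-increasing in $|\omega|$ as an extended-real function.

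\emph{Finiteness for $\omega\neq0$.} Finiteness was already observed in \cref{def:symbol}; I will recall the argument briefly. On $[0,1]$ we have $\phi_\omega\le 1/\omega^{2}$, and the measure $\nu([0,1])$ is finite. Finiteness of $\nu$ on bounded sets follows as in the proof of \cref{lem:posrep}: $e^{-\tau}\nu([0,1])\le k(\tau)<\infty$. On $[1,\infty)$ we have $\phi_\omega\le\lambda^{-1}$, which is $\nu$-integrable by local integrability of $k$.

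\emph{Supremum.} Monotonicity gives $\sup_\omega m=m(0)$. \cref{def:cm} gives $m(0)=\int\lambda^{-1}\,\diff\nu=\norm{k}_{L^{1}(0,\infty)}$ via Tonelli; the atom at $\lambda=0$ contributes $+\infty$ on both sides, since then $k\ge\nu(\{0\})>0$ is not integrable. The value lies in $(0,\infty]$ because $\nu\neq0$: either $\nu$ charges $(0,\infty)$, where $\lambda^{-1}>0$, or it charges $\{0\}$. One caveat is that for $\omega\neq0$ the atom at $0$ contributes $0$, so $m$ jumps at the origin. The supremum statement still holds because $m(0)$ is attained at $\omega=0$.

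\emph{Decay.} This is the step needing care, since $\nu$ may have infinite total mass and $\int\lambda^{-1}\diff\nu$ may diverge near $0$. I would split at $\lambda=1$.
\begin{itemize}
\item On $[1,\infty)$, $\phi_\omega(\lambda)\to0$ pointwise as $|\omega|\to\infty$ and is dominated by $\lambda^{-1}\in L^{1}(\nu|_{[1,\infty)})$. Dominated convergence therefore sends this part to $0$.
\item On $[0,1)$, use $\phi_\omega(\lambda)\le\lambda/\omega^{2}\le 1/\omega^{2}$ together with finiteness of $\nu([0,1])$. This part is at most $\nu([0,1))/\omega^{2}\to0$.
\end{itemize}
The atom at $0$ causes no trouble for $\omega\neq0$, since $\phi_\omega(0)=0$. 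Combining the two pieces gives $m(\omega)\to0$ for every locally integrable completely monotone kernel, with no use of $\norm{k}_{L^{1}(0,\infty)}<\infty$. This covers the fractional kernels.
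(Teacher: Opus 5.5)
Your proposal is correct and follows essentially the paper's proof: evenness and monotonicity from the pointwise behaviour of $\lambda/(\lambda^{2}+\omega^{2})$ integrated against $\nu\geq0$, finiteness of $\nu$ near $0$ via $e^{-\tau}\nu([0,1])\leq k(\tau)$, integrability of $\lambda^{-1}$ on $[1,\infty)$ from local integrability, and Tonelli for $m(0)=\norm{k}_{L^{1}(0,\infty)}$. The only difference is cosmetic, in the decay step: you apply dominated convergence on $[1,\infty)$ with majorant $\lambda^{-1}$ and bound the $[0,1)$ part by $\nu([0,1))\,\omega^{-2}$, while the paper cuts off an $\varepsilon$-small tail $[R,\infty)$ and applies dominated convergence to the finite measure $\nu|_{[0,R)}$; the two arguments are equivalent.
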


\begin{proof}
Write $q_{\lambda}(\omega):=\lambda/(\lambda^{2}+\omega^{2})$, so that $m(\omega)=\int_{[0,\infty)}q_{\lambda}(\omega)\diff\nu(\lambda)$. Each $q_{\lambda}$ is even and non-increasing in $|\omega|$; integrating against the non-negative measure $\nu$ shows that $m$ has the same two properties, with $m(0)$ read in the extended sense.

The measure $\nu$ is finite on $[0,1)$. Indeed, fixing $\tau>0$ and using \cref{eq:bernstein}, $e^{-\tau}\nu([0,1])\leq\int_{[0,1]}e^{-\lambda\tau}\diff\nu(\lambda)\leq k(\tau)<\infty$, so $\nu([0,1])<\infty$. Local integrability of $k$ is equivalent to $\int_{[1,\infty)}\lambda^{-1}\diff\nu(\lambda)<\infty$: by \cref{eq:bernstein} and Tonelli's theorem
\begin{align*}
\int_{0}^{\Tend}k(t)\diff t=\Tend\,\nu(\{0\})+\int_{(0,\infty)}\frac{1-e^{-\lambda\Tend}}{\lambda}\diff\nu(\lambda),
\end{align*}
whose finiteness, since $(1-e^{-\lambda\Tend})/\lambda\leq\Tend$ on $[0,1)$ and $(1-e^{-\Tend})/\lambda\leq(1-e^{-\lambda\Tend})/\lambda\leq\lambda^{-1}$ on $[1,\infty)$, is controlled by $\int_{[1,\infty)}\lambda^{-1}\diff\nu$.

\emph{Finiteness for $\omega\neq0$.} Splitting at $\lambda=1$ and using $q_{\lambda}(\omega)\leq\omega^{-2}$ on $[0,1)$ and $q_{\lambda}(\omega)\leq\lambda^{-1}$ on $[1,\infty)$,
\begin{align*}
m(\omega)\leq\frac{\nu([0,1))}{\omega^{2}}+\int_{[1,\infty)}\lambda^{-1}\diff\nu(\lambda)<\infty,\qquad\omega\neq0.
\end{align*}

\emph{Value at zero.} Setting $\omega=0$ and using \cref{eq:bernstein} with Tonelli's theorem,
\begin{align*}
m(0)=\int_{[0,\infty)}\lambda^{-1}\diff\nu(\lambda)=\int_{0}^{\infty}k(t)\diff t=\norm{k}_{L^{1}(0,\infty)}\in(0,\infty],
\end{align*}
where $\lambda^{-1}=+\infty$ at $\lambda=0$. By monotonicity, $\sup_{\omega\in\R}m(\omega)=m(0)$.

\emph{High-frequency decay.} Let $\varepsilon>0$ and choose $R\geq1$ with $\int_{[R,\infty)}\lambda^{-1}\diff\nu(\lambda)<\varepsilon$. Since $q_{\lambda}(\omega)\leq\lambda^{-1}$, the tail satisfies $\int_{[R,\infty)}q_{\lambda}(\omega)\diff\nu<\varepsilon$ for every $\omega$. On the finite measure $\nu|_{[0,R)}$ we have $q_{\lambda}(\omega)\to0$ pointwise as $|\omega|\to\infty$ and $q_{\lambda}(\omega)\leq R$ for $|\omega|\geq1$, so dominated convergence gives $\int_{[0,R)}q_{\lambda}(\omega)\diff\nu\to0$. Hence $\limsup_{|\omega|\to\infty}m(\omega)\leq\varepsilon$ for every $\varepsilon>0$; since $m\geq0$, we conclude $\lim_{|\omega|\to\infty}m(\omega)=0$.
\end{proof}

\begin{example}[The fractional symbol]\label{ex:fracsymbol}
For the fractional kernel of \cref{ex:kernels}, the symbol has the following closed form. With $\diff\nu_{\alpha}(\lambda)=\tfrac{\sin(\pi\alpha)}{\pi}\lambda^{\alpha-1}\diff\lambda$ and the substitution $\lambda=|\omega|s$, for any $\omega\neq0$,
\begin{align*}
\displaystyle
m(\omega)
=\frac{\sin(\pi\alpha)}{\pi}
 \int_{0}^{\infty}\frac{\lambda^{\alpha}}{\lambda^{2}+\omega^{2}}\diff\lambda
=\frac{\sin(\pi\alpha)}{\pi}\,|\omega|^{\alpha-1}
 \int_{0}^{\infty}\frac{s^{\alpha}}{s^{2}+1}\diff s
=\sin \left (\frac{\pi\alpha}{2} \right)\,|\omega|^{\alpha-1},
\end{align*}
where $\int_{0}^{\infty}s^{\alpha}(1+s^{2})^{-1}\diff s =\tfrac{\pi}{2} \left(\cos\tfrac{\pi\alpha}{2}\right)^{-1}$ for $\alpha\in(0,1)$ and $\sin(\pi\alpha)=2\sin(\tfrac{\pi\alpha}{2})\cos(\tfrac{\pi\alpha}{2})$ were used. Thus,
\begin{align*}
\displaystyle
m(\omega)
=
c_{\alpha}|\omega|^{-(1-\alpha)},
\quad
c_{\alpha}
=
\sin \left(\frac{\pi\alpha}{2} \right)>0.
\end{align*}
This example illustrates two points. First, the high-frequency decay $m(\omega)\to0$ in \cref{lem:symbol} remains valid even though
\begin{align*}
\displaystyle
m(0)=\norm{k}_{L^{1}(0,\infty)}=\infty.
\end{align*}
Second, the decay rate is explicitly $1-\alpha$, which will later give the coercivity-gap index
\begin{align*}
\displaystyle
\rho(k)=1-\alpha.
\end{align*}
Therefore, the fractional memory is not frequency-uniformly coercive, but the precise rate at which its coercive strength is lost can be quantified.
\end{example}

\subsection{Frequency representation and the exact gap}
We rewrite the memory dissipation in the frequency domain. To this end, we extend $W$ by zero outside $(0,\Tend)$ and continue to denote the extension by $W$. Then,
\begin{align*}
\displaystyle
W\in L^2(\R;\Lp)
\end{align*}
and
\begin{align*}
\displaystyle
\int_{\R}\norm{W(t)}^2_{\Lp} \diff t
=
\int_0^{\Tend}\norm{W(t)}^2_{\Lp} \diff t.
\end{align*}
Because $W$ has compact support, it also belongs to $L^1(\R;\Lp)$. Therefore, its Bochner Fourier transform is well defined as
\begin{align*}
\displaystyle
\widehat W(\omega)
:=
\int_{\R}W(t)e^{-i\omega t}\diff t.
\end{align*}

Although $W$ is real-valued, its Fourier transform is generally complex-valued. We therefore work in the complexified Hilbert space
\begin{align*}
\displaystyle
\Lp_{\C}:=L^2(\Omega;\C)^d
\end{align*}
with the Hermitian inner product
\begin{align*}
\displaystyle
(F,G)_{\Lp_{\C}}
:=
\int_{\Omega}F(x)\cdot\overline{G(x)}\diff x.
\end{align*}
With the Fourier-transform convention above, Plancherel's theorem gives
\begin{align*}
\displaystyle
\int_{\R}
(F(t),G(t))_{\Lp_{\C}}\diff t
=
\frac{1}{2\pi}
\int_{\R}
(\widehat F(\omega),\widehat G(\omega))_{\Lp_{\C}}
\diff\omega
\end{align*}
for $F,G\in L^2(\R;\Lp_{\C})$. In particular,
\begin{align*}
\displaystyle
\int_0^{\Tend}a_1\left(u(t),u(t)\right)\diff t
=
\int_{\R}\norm{W(t)}^2_{\Lp} \diff t
=
\frac{1}{2\pi}
\int_{\R}\norm{\widehat W(\omega)}^2_{\LpC} \diff\omega.
\end{align*}

\begin{proposition}[Frequency representation]\label{prop:freq}
Assume that
\begin{align*}
\displaystyle
\norm{k}_{L^{1}(0,\infty)}<\infty.
\end{align*}
Then, for every $u$ with $W\in L^{2}(0,\Tend;\Lp)$,
\begin{align}\label{eq:freqrep}
\displaystyle
\Dm[u](\Tend)
=
\frac{1}{2\pi}
\int_{\R}
m(\omega)\,
\norm{\widehat W(\omega)}^{2}_{\LpC}
\diff\omega.
\end{align}
\end{proposition}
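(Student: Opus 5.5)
We extend $W$ by zero outside $(0,\Tend)$ and regard both $k$ and $W$ as functions on $\R$, with $k$ extended by zero to negative times. The first observation is that, with zero prehistory, the truncated convolution on $(0,\Tend)$ coincides with the full-line convolution: for $t\in(0,\Tend)$,
\begin{align*}
(k*W)(t)=\int_{\R}k(t-s)W(s)\diff s ,
\end{align*}
because the integrand vanishes for $s<0$ (support of $W$) and for $s>t$ (causality of $k$). Hence
\begin{align*}
\Dm[u](\Tend)=\int_{\R}\left((k*W)(t),W(t)\right)\diff t ,
\end{align*}
where now $k*W$ denotes the convolution on $\R$. The values of $k*W$ outside $(0,\Tend)$ are irrelevant, since $W$ vanishes there.

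Since $k\in L^{1}(\R)$ and $W\in L^{1}\cap L^{2}(\R;\Lp)$, Young's inequality gives $k*W\in L^{2}(\R;\Lp)$. The convolution theorem for Bochner integrals gives $\widehat{k*W}(\omega)=\hat k(\omega)\widehat W(\omega)$; this follows from Fubini, justified by the absolute integrability of $k\otimes W$. Plancherel, in the form recalled above, then yields
\begin{align*}
\Dm[u](\Tend)=\frac{1}{2\pi}\int_{\R}\hat k(\omega)\,\norm{\widehat W(\omega)}_{\LpC}^{2}\diff\omega .
\end{align*}
The left-hand side is real, so we may take real parts. We use $\Re\hat k=m$, which holds by \cref{def:symbol} under the finite-mass hypothesis (Fubini applied to the Bernstein representation). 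This gives \cref{eq:freqrep}.

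Alternatively, the imaginary part drops out on its own. Because $k$ and $W$ are real, $\hat k(-\omega)=\overline{\hat k(\omega)}$ and $\norm{\widehat W(-\omega)}=\norm{\widehat W(\omega)}$. Thus $\Im\hat k$ is odd and integrates to zero against the even weight $\norm{\widehat W}^{2}$. Integrability of $\hat k\,\norm{\widehat W}^{2}$ is clear, since $|\hat k|\le\norm{k}_{L^{1}}$ and $\widehat W\in L^{2}$.

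The main obstacle is the careful justification in the vector-valued setting. Specifically, the convolution theorem and Plancherel must hold for $\Lp_{\C}$-valued functions, and the real inner product $(\cdot,\cdot)$ must be identified with the real part of the Hermitian one. We will handle both points componentwise: apply Plancherel in $L^{2}(\R\times\Omega;\C)^{d}$, or pick an orthonormal basis of $\Lp$ and reduce to scalar identities summed via monotone or dominated convergence. Note that \cref{lem:posrep} is not needed here, although it is consistent with the final identity, since $m\ge0$.
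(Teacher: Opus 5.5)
Your proposal is correct and follows essentially the same route as the paper. Both extend $k$ and $W$ by zero, identify the truncated convolution with the whole-line one on $[0,\Tend]$, use the convolution theorem and Plancherel to reach $\frac{1}{2\pi}\int_{\R}\hat k(\omega)\norm{\widehat W(\omega)}^{2}_{\LpC}\diff\omega$, and then pass to $m=\Re\hat k$. The paper removes $\Im\hat k$ by the oddness argument you list as your alternative, while your primary step (the left-hand side is real, so take real parts) is an equally valid shortcut.
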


\begin{proof}
We extend both the kernel and the history field to the whole real line. We define
\begin{align*}
\displaystyle
\tilde{k}(t)
:=
\begin{cases}
k(t), & t>0,\\
0, & t\leq0,
\end{cases}
\end{align*}
and extend $W$ by zero outside $[0,\Tend]$. We continue to denote the zero extension of $W$ by $W$. Then,
\begin{align*}
\displaystyle
\tilde{k}\in L^{1}(\R), \quad W\in L^{2}(\R;\Lp).
\end{align*}
By Young's convolution inequality,
\begin{align*}
\displaystyle
\tilde{k}*W\in L^{2}(\R;\Lp), \quad \norm{\tilde{k}*W}_{L^{2}(\R;\Lp)}
\leq
\norm{k}_{L^{1}(0,\infty)}
\norm{W}_{L^{2}(\R;\Lp)}.
\end{align*}
Therefore, the scalar function
\begin{align*}
\displaystyle
t\longmapsto \left( (\tilde{k}*W)(t),W(t) \right)
\end{align*}
belongs to $L^{1}(\R)$.

For $t\in[0,\Tend]$, the zero extensions give
\begin{align*}
\displaystyle
(\tilde{k}*W)(t)
&=
\int_{\R} \tilde{k}(t-s)W(s)\diff s
= \int_{0}^{t} k(t-s)W(s)\diff s
= (k*W)(t).
\end{align*}
Furthermore, $W(t)=0$ for $t\notin[0,\Tend]$. Therefore,
\begin{align}\label{eq:diss-whole-line}
\displaystyle
\Dm[u](\Tend)
=
\int_{0}^{\Tend}
\left((k*W)(t),W(t)\right)\diff t
=
\int_{\R}
\left((\tilde{k}*W)(t),W(t)\right)\diff t.
\end{align}

Because $\tilde{k}\in L^{1}(\R)$ and $W\in L^{2}(\R;\Lp)$, the Fourier convolution theorem yields
\begin{align*}
\displaystyle
\widehat{\tilde{k}*W}(\omega)
=
\widehat{\tilde{k}}(\omega) \widehat W(\omega)
\quad\text{for almost every }\omega\in\R.
\end{align*}
From the definition of the causal Fourier transform,
\begin{align*}
\displaystyle
\widehat{\tilde{k}}(\omega)
=
\int_{\R}\tilde{k}(t)e^{-i\omega t}\diff t
=
\int_{0}^{\infty}k(t)e^{-i\omega t}\diff t
=
\hat k(\omega).
\end{align*}
Consequently,
\begin{align*}
\displaystyle
\widehat{\tilde{k}*W}(\omega)
=
\hat k(\omega) \widehat W(\omega).
\end{align*}

Applying Plancherel's theorem to $\tilde{k}*W$ and $W$ in \cref{eq:diss-whole-line}, we obtain
\begin{align}\label{eq:diss-complex}
\displaystyle
\Dm[u](\Tend)
&=
\frac{1}{2\pi}
\int_{\R}
\left(
\widehat{\tilde{k}*W}(\omega),
\widehat W(\omega)
\right)
\diff\omega
\nonumber\\
&=
\frac{1}{2\pi}
\int_{\R}
\left(
\hat k(\omega) \widehat W(\omega),
\widehat W(\omega)
\right)
\diff\omega
\nonumber\\
&=
\frac{1}{2\pi}
\int_{\R}
\hat k(\omega)
\norm{\widehat W(\omega)}^{2}_{\LpC}
\diff\omega.
\end{align}
Here, the inner product and norm are those of the complexified space $L^{2}(\Omega;\C)^{d}$. The last integral is absolutely convergent, because
\begin{align*}
\displaystyle
\lvert\hat k(\omega)\rvert
\leq
\norm{k}_{L^{1}(0,\infty)}
\end{align*}
and, by Plancherel's theorem,
\begin{align*}
\displaystyle
\int_{\R}\norm{\widehat W(\omega)}^{2}_{\LpC} \diff\omega
=
2\pi
\int_{\R}\norm{W(t)}^{2}_{\Lp} \diff t
<\infty.
\end{align*}

It remains to show that only the real part of $\hat k$ contributes. Because $k$ is real-valued,
\begin{align*}
\displaystyle
\hat k(-\omega)
=
\overline{\hat k(\omega)}.
\end{align*}
It follows that
\begin{align*}
\displaystyle
\Re\hat k(-\omega)=\Re\hat k(\omega),
\quad
\Im\hat k(-\omega)=-\Im\hat k(\omega).
\end{align*}
Thus, $\Re\hat k$ is even, whereas $\Im\hat k$ is odd.

Similarly, because $W$ is real-valued,
\begin{align*}
\displaystyle
\widehat W(-\omega)
=
\overline{\widehat W(\omega)},
\end{align*}
and therefore
\begin{align*}
\displaystyle
\norm{\widehat W(-\omega)}^{2}_{\LpC}
=
\norm{\widehat W(\omega)}^{2}_{\LpC}.
\end{align*}
Therefore, the function
\begin{align*}
\displaystyle
\omega\longmapsto
\Im\hat k(\omega)\,
\norm{\widehat W(\omega)}^{2}_{\LpC}
\end{align*}
is odd and integrable on $\R$. Its integral over $\R$ is consequently zero:
\begin{align*}
\displaystyle
\int_{\R}
\Im\hat k(\omega)\,
\norm{\widehat W(\omega)}^{2}_{\LpC}
\diff\omega
=
0.
\end{align*}
Taking the real part of \cref{eq:diss-complex}, and recalling that
\begin{align*}
\displaystyle
m(\omega)=\Re\hat k(\omega),
\end{align*}
we conclude that
\begin{align*}
\displaystyle
\Dm[u](\Tend)
=
\frac{1}{2\pi}
\int_{\R}
m(\omega)\,
\norm{\widehat W(\omega)}^{2}_{\LpC}
\diff\omega,
\end{align*}
which is \cref{eq:freqrep}.
\end{proof}

The frequency representation immediately yields an exact formula for the coercivity gap and a sharp criterion for its sign. To formulate the sign criterion also over the whole line, we define, for a real-valued $W\in L^{2}(\R;\Lp)$, the \emph{whole-line gap functional}
\begin{align}\label{eq:wholegap}
\mathcal{G}_{\R}(W)
:=
\frac{1}{2\pi}
\int_{\R}
\left(1-m(\omega)\right)
\norm{\widehat W(\omega)}_{\LpC}^{2}
\diff\omega ,
\end{align}
which is finite for every such $W$ when $\norm{k}_{L^{1}(0,\infty)}<\infty$, because $0\leq m(\omega)\leq\norm{k}_{L^{1}(0,\infty)}$ by \cref{lem:symbol} and $\widehat W\in L^{2}(\R;\LpC)$ by Plancherel's theorem. If $W$ is the zero extension of an energy field supported in $[0,\Tend]$, then $\mathcal{G}_{\R}(W)$ coincides with the finite-horizon gap on the right-hand side below.

\begin{theorem}[Exact coercivity gap and sign criterion]\label{thm:gap}
Assume that
\begin{align*}
\displaystyle
\norm{k}_{L^{1}(0,\infty)}<\infty.
\end{align*}
Then, for every $u$ with $W\in L^{2}(0,\Tend;\Lp)$,
\begin{align}\label{eq:gap}
\int_{0}^{\Tend}
a_{1}\left(u(t),u(t)\right)\diff t
-
\Dm[u](\Tend)
=
\frac{1}{2\pi}
\int_{\R}
\left(1-m(\omega)\right)
\norm{\widehat W(\omega)}_{\LpC}^{2}
\diff\omega.
\end{align}
Furthermore,
\begin{enumerate}[leftmargin=2.2em,label=\textnormal{(\roman*)}]
\item
If
\begin{align*}
\displaystyle
\norm{k}_{L^{1}(0,\infty)}\leq1,
\end{align*}
then the gap in \cref{eq:gap} is non-negative for every such $u$ and every $\Tend>0$.

\item
Suppose, in addition, that $a_{1}$ is not identically zero. If
\begin{align*}
\displaystyle
\norm{k}_{L^{1}(0,\infty)}>1,
\end{align*}
then the whole-line gap functional \cref{eq:wholegap} is strictly negative for some real-valued $W\in L^{2}(\R;\Lp)$: $\mathcal{G}_{\R}(W)<0$. The corresponding negative gap at a finite horizon follows from \cref{thm:finite-profile}\,\textnormal{(iii), (v)}, as recorded in \cref{cor:critical-horizon}.
\end{enumerate}
Consequently, when $a_{1}\not\equiv0$, the gap is non-negative for all states and all time horizons if and only if
\begin{align*}
\displaystyle
\norm{k}_{L^{1}(0,\infty)}\leq1.
\end{align*}
\end{theorem}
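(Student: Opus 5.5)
The identity \cref{eq:gap} follows by subtracting two Plancherel formulas. The time-domain energy is rewritten in the frequency domain using the zero extension of $W$, as recorded just before \cref{prop:freq}:
\[
\int_{0}^{\Tend}a_{1}(u,u)\,\diff t=\frac{1}{2\pi}\int_{\R}\norm{\widehat W(\omega)}^{2}_{\LpC}\diff\omega .
\]
The memory dissipation is given by \cref{prop:freq}, which applies because $\norm{k}_{L^{1}(0,\infty)}<\infty$. Both integrals are finite, since $0\le m\le\norm{k}_{L^{1}(0,\infty)}$ by \cref{lem:symbol} and $\widehat W\in L^{2}(\R;\LpC)$. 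Their difference is therefore a single absolutely convergent integral with weight $1-m(\omega)$, which is \cref{eq:gap}.

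For (i), I will use \cref{lem:symbol}: $m$ is non-increasing in $|\omega|$ with $\sup_{\omega}m=m(0)=\norm{k}_{L^{1}(0,\infty)}\le 1$. Hence $1-m(\omega)\ge0$ pointwise, and the right-hand side of \cref{eq:gap} is non-negative for every $W$ and every horizon $\Tend$.

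For (ii), the task is to build a real-valued $W\in L^{2}(\R;\Lp)$ whose spectrum concentrates where $m>1$.
\begin{itemize}
\item Continuity of $m$ on $\R$: it follows from dominated convergence. Here $m(0)<\infty$, and the integrand $\lambda/(\lambda^{2}+\omega^{2})$ is dominated by $\lambda^{-1}$, which is $\nu$-integrable because $\norm{k}_{L^{1}}=\int\lambda^{-1}\diff\nu<\infty$. In particular $\nu(\{0\})=0$.
\item A neighbourhood where $m>1$: since $m(0)=\norm{k}_{L^{1}(0,\infty)}>1$, continuity gives $\delta>0$ and $\eta>0$ with $m(\omega)\ge1+\eta$ for $|\omega|\le\delta$.
\item The spatial factor: since $a_{1}\not\equiv0$, there is $v\in V$ with $E:=A_{1}^{1/2}\nabla v\neq0$ in $\Lp$.
\item The test field: set $W(t)=\phi(t)E$ with $\phi$ real, so that $\|\widehat W(\omega)\|^{2}=|\hat\phi(\omega)|^{2}\|E\|^{2}$. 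Take $\phi(t)=\varepsilon^{1/2}\psi(\varepsilon t)$ for a fixed real Schwartz function $\psi$ with $\|\psi\|_{L^{2}}=1$. Then $|\hat\phi(\omega)|^{2}=|\hat\psi(\omega/\varepsilon)|^{2}$, so as $\varepsilon\to0$ the fraction of spectral mass in $|\omega|>\delta$ tends to $0$.
\item The estimate: splitting the integral at $|\omega|=\delta$ and using $1-m\le1$ on the high-frequency part gives
\[
2\pi\,\mathcal{G}_{\R}(W)\le\|E\|^{2}\Bigl[-\eta\int_{|\omega|\le\delta}|\hat\phi|^{2}+\int_{|\omega|>\delta}|\hat\phi|^{2}\Bigr],
\]
which is negative for small $\varepsilon$.
\item Alternative choice of $\phi$: take $\hat\phi$ to be a real, even, smooth bump supported in $[-\delta,\delta]$. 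Then $\phi$ is real and in $L^{2}$, and $\mathcal{G}_{\R}(W)\le-\eta\|E\|^{2}\|\hat\phi\|^{2}/(2\pi)<0$ directly.
\end{itemize}
The main subtlety is that $W$ is defined on the whole line rather than on $[0,\Tend]$; this is exactly why the statement defers the finite-horizon case to \cref{cor:critical-horizon}.

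The final equivalence then follows: (i) gives sufficiency. For necessity, the finite-horizon negativity supplied by \cref{cor:critical-horizon} (via \cref{thm:finite-profile}) is taken as given.
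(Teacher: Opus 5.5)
Your proposal is correct and follows the paper's proof essentially step for step. The identity comes from subtracting the Plancherel formula from \cref{prop:freq}, (i) from $0\le m\le m(0)=\norm{k}_{L^{1}(0,\infty)}$, and (ii) from a field $W=\phi E$ whose real, even Fourier bump sits where $m>1$; your ``alternative choice'' is exactly the paper's construction, and the finite-horizon part is deferred to \cref{cor:critical-horizon} as in the paper. The differences are cosmetic: you get continuity of $m$ by dominated convergence in the Bernstein representation rather than from continuity of $\hat k$ for $k\in L^{1}$, and in your scaling variant the correct formula is $|\hat\phi(\omega)|^{2}=\varepsilon^{-1}|\hat\psi(\omega/\varepsilon)|^{2}$, a harmless slip since only the fraction of spectral mass enters.
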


\begin{proof}
We derive the exact identity. From the definition of the energy field,
\begin{align*}
\displaystyle
a_{1}\left(u(t),u(t)\right)
=
\norm{W(t)}_{\Lp}^{2}.
\end{align*}
After extending $W$ by zero outside $[0,\Tend]$, Plancherel's theorem gives
\begin{align*}
\int_{0}^{\Tend}
a_{1}\left(u(t),u(t)\right)\diff t
&=
\int_{\R}
\norm{W(t)}_{\Lp}^{2}\diff t
=
\frac{1}{2\pi}
\int_{\R}
\norm{\widehat W(\omega)}_{\LpC}^{2}\diff\omega.
\end{align*}
On the other hand, \cref{prop:freq} gives
\begin{align*}
\displaystyle
\Dm[u](\Tend)
=
\frac{1}{2\pi}
\int_{\R}
m(\omega)
\norm{\widehat W(\omega)}_{\LpC}^{2}\diff\omega.
\end{align*}
Subtracting these two identities yields
\begin{align*}
\displaystyle
\int_{0}^{\Tend}
a_{1}\left(u(t),u(t)\right)\diff t
-
\Dm[u](\Tend)
=
\frac{1}{2\pi}
\int_{\R}
\left(1-m(\omega)\right)
\norm{\widehat W(\omega)}_{\LpC}^{2}\diff\omega,
\end{align*}
which is \cref{eq:gap}.

We next prove \textnormal{(i)}. From \cref{lem:symbol},
\begin{align*}
\displaystyle
0\leq m(\omega)\leq m(0)
=
\norm{k}_{L^{1}(0,\infty)}
\quad
\text{for any }\omega\in\R.
\end{align*}
Therefore, if $\norm{k}_{L^{1}(0,\infty)}\leq1$, then
\begin{align*}
\displaystyle
1-m(\omega)\geq0
\quad
\text{for any }\omega\in\R.
\end{align*}
Because $\norm{\widehat W(\omega)}_{\LpC}^{2}\geq0$, the integrand in \cref{eq:gap} is non-negative almost everywhere. Therefore, the gap is non-negative.

We prove \textnormal{(ii)}. Assume that
\begin{align*}
\displaystyle
\norm{k}_{L^{1}(0,\infty)}
=
m(0)>1.
\end{align*}
Because $k\in L^{1}(0,\infty)$, its Fourier transform $\hat k$ is continuous on $\R$. Therefore, $m=\Re\hat k$ is also continuous. It follows from $m(0)>1$ that there exists $\delta>0$ such that
\begin{align*}
\displaystyle
m(\omega)>1
\quad
\text{for any }\omega\in(-\delta,\delta).
\end{align*}
Because $a_{1}\not\equiv0$, we may choose $w_{0}\in V$ such that
\begin{align*}
\displaystyle
c_{0}
:=
a_{1}(w_{0},w_{0})
=
\norm{A_{1}^{1/2}\nabla w_{0}}_{\Lp}^{2}
>0.
\end{align*}
Choose a non-zero, real-valued, even function
\begin{align*}
\displaystyle
\hat g \in C_{c}^{\infty}(-\delta,\delta),
\end{align*}
and let $g$ be its inverse Fourier transform. The support condition ensures that all frequency components of $g$ lie in the interval on which $m(\omega)>1$. Because $\hat g$ is real-valued and even,
\begin{align*}
\displaystyle
g(t)
=
\frac{1}{2\pi}
\int_{-\delta}^{\delta}
\hat g(\omega)e^{i\omega t}\,\diff\omega
=
\frac{1}{\pi}
\int_{0}^{\delta}
\hat g(\omega)\cos(\omega t)\,\diff\omega,
\end{align*}
so $g$ is also real-valued and even. Furthermore, because $\hat g$ is smooth and compactly supported, its inverse Fourier transform belongs to the Schwartz class. In particular,
\begin{align*}
\displaystyle
g\in L^{2}(\R).
\end{align*}
We define the whole-line field
\begin{align*}
\displaystyle
W_{\infty}(t)
:=
g(t)A_{1}^{1/2}\nabla w_{0}
\in
L^{2}(\R;\Lp).
\end{align*}
The whole-line statement concerns the field $W$ alone, because the functional $\mathcal{G}_{\R}$ of \cref{eq:wholegap} is defined on $L^{2}(\R;\Lp)$; no whole-line state $u$ is involved. The field $W_{\infty}$ is nevertheless of energy-field form, generated by the spatial profile $w_{0}$ and the temporal amplitude $g$.
Its Fourier transform satisfies
\begin{align*}
\displaystyle
\widehat W_{\infty}(\omega)
=
\hat g(\omega)
A_{1}^{1/2}\nabla w_{0},
\end{align*}
and therefore
\begin{align*}
\displaystyle
\norm{\widehat W_{\infty}(\omega)}_{\LpC}^{2}
=
c_{0}\lvert \hat g(\omega)\rvert^{2}.
\end{align*}
Recall the whole-line gap functional $\mathcal{G}_{\R}$ of \cref{eq:wholegap}. Because $\hat g$ is supported in $(-\delta,\delta)$, it follows that
\begin{align*}
\displaystyle
\mathcal{G}_{\R}(W_{\infty})
=
\frac{c_{0}}{2\pi}
\int_{-\delta}^{\delta}
\left(1-m(\omega)\right)
\lvert\widehat g(\omega)\rvert^{2}
\diff\omega.
\end{align*}
On $(-\delta,\delta)$, we have $m(\omega)>1$, and therefore $1-m(\omega)<0$. Furthermore, $\hat g\not\equiv0$. Thus, the integrand is non-positive everywhere and strictly negative on a set of positive measure. Consequently,
\begin{align*}
\displaystyle
\mathcal{G}_{\R}(W_{\infty})<0.
\end{align*}
We write
\begin{align*}
\displaystyle
\mathcal{G}_{\R}(W_{\infty})=-\kappa
\end{align*}
for some $\kappa>0$. This proves the whole-line statement.

The finite-horizon claim in \textnormal{(ii)}---a genuine state on a bounded interval with strictly negative gap---is established independently in \cref{cor:critical-horizon} through the profile theory of \cref{subsec:finite-horizon}; there is no circularity, since \cref{thm:finite-profile,cor:critical-horizon} do not rely on the present theorem. This completes the proof.
\end{proof}

The preceding theorem has a simple frequency-domain interpretation. The instantaneous energy assigns the constant weight $1$ to every temporal frequency, whereas the memory dissipation assigns the frequency-dependent weight $m(\omega)$. Therefore, the contribution of the frequency $\omega$ to the coercivity gap is weighted by $1-m(\omega)$.

From \cref{lem:symbol},
\begin{align*}
\displaystyle
0\leq m(\omega)\leq m(0)
=
\norm{k}_{L^{1}(0,\infty)}.
\end{align*}
Therefore, $\norm{k}_{L^{1}(0,\infty)}\leq1$ is exactly the condition that keeps $1-m(\omega)$ non-negative at every frequency. If instead $\norm{k}_{L^{1}(0,\infty)}>1$, then $m(\omega)>1$ near zero frequency, and sufficiently low-frequency states yield a negative gap. Thus, the unit-mass condition is the sharp threshold for non-negativity over all states and all time horizons.

\subsection{The finite-horizon coercivity profile}
\label{subsec:finite-horizon}
The unit-mass condition of \cref{thm:gap} characterises the sign of the coercivity gap simultaneously over all time horizons. At a fixed horizon, however, the relevant threshold is not the full mass $\norm{k}_{L^{1}(0,\infty)}$, but the norm of the memory quadratic form restricted to that interval. We now introduce this finite-horizon coercivity profile and obtain an exact fixed-horizon sign criterion.

Unlike the Fourier representation of \cref{prop:freq,thm:gap}, the construction below requires only local integrability of the kernel. It therefore applies also to completely monotone kernels of infinite $L^{1}$-mass, including the fractional kernels of \cref{ex:kernels}.

\begin{definition}[Finite-horizon coercivity profile]
\label{def:finite-profile}
Let $k$ be a locally integrable completely monotone kernel. For $\Tend>0$ and $\phi\in L^{2}(0,\Tend;\R)$, we define
\begin{align}
\mathcal{Q}_{k,\Tend}[\phi]
:=
\int_{0}^{\Tend}
\phi(t)
\int_{0}^{t}
k(t-s)\phi(s)\diff s\diff t.
\label{eq:finite-quadratic}
\end{align}
The finite-horizon coercivity profile is
\begin{align}
\Lambda_{k}(\Tend)
:=
\sup_{\phi\in L^{2}(0,\Tend)\setminus\{0\}}
\frac{\mathcal{Q}_{k,\Tend}[\phi]}
     {\norm{\phi}_{L^{2}(0,\Tend)}^{2}}.
\label{eq:finite-profile}
\end{align}
\end{definition}

The quadratic form admits a symmetric representation. Indeed, the local integrability of $k$ and the Cauchy--Schwarz and Young inequalities justify the following change in the order of integration:
\begin{align}
\mathcal{Q}_{k,\Tend}[\phi]
&=
\int_{0<s<t<\Tend}
k(t-s)\phi(s)\phi(t)\diff s\diff t
\nonumber
\\
&=
\frac12
\int_{0}^{\Tend}\int_{0}^{\Tend}
k(\lvert t-s\rvert)\phi(s)\phi(t)
\diff s\diff t.
\label{eq:symmetrised-quadratic}
\end{align}
We therefore define
\begin{align}
(\mathcal{S}_{k,\Tend}\phi)(t)
:=
\frac12
\int_{0}^{\Tend}
k(\lvert t-s\rvert)\phi(s)\diff s,
\quad 0<t<\Tend.
\label{eq:finite-operator}
\end{align}
Then,
\begin{align}
\mathcal{Q}_{k,\Tend}[\phi]
=
\left(
\mathcal{S}_{k,\Tend}\phi,\phi
\right)_{L^{2}(0,\Tend)}.
\label{eq:finite-form-operator}
\end{align}

\begin{theorem}[Finite-horizon coercivity profile]
\label{thm:finite-profile}
Let $k$ be a non-trivial locally integrable completely monotone kernel. Then, the following assertions hold.

\begin{enumerate}[leftmargin=2.2em,label=\textnormal{(\roman*)}]
\item
For any $\Tend>0$, the operator $\mathcal{S}_{k,\Tend}$ is bounded, self-adjoint, and non-negative on $L^{2}(0,\Tend)$. Furthermore,
\begin{align}
\Lambda_k(\Tend)
=
\norm{\mathcal{S}_{k,\Tend}}_
{\mathcal{L}(L^{2}(0,\Tend))}
\label{eq:profile-operator-norm}
\end{align}
and
\begin{align}
0
<
\Lambda_k(\Tend)
\leq
\norm{k}_{L^{1}(0,\Tend)}.
\label{eq:profile-upper}
\end{align}

\item
The map
\begin{align*}
\Tend\longmapsto\Lambda_k(\Tend)
\end{align*}
is non-decreasing and left-continuous on $(0,\infty)$.

\item
One has
\begin{align}
\lim_{\Tend\downarrow0}\Lambda_k(\Tend)=0
\label{eq:profile-zero}
\end{align}
and, in the extended sense,
\begin{align}
\lim_{\Tend\to\infty}\Lambda_k(\Tend)
= 
\norm{k}_{L^{1}(0,\infty)}
\in(0,\infty].
\label{eq:profile-infinity}
\end{align}

\item
Let $X$ be a non-zero real Hilbert space. Then, for any $W\in L^{2}(0,\Tend;X)$,
\begin{align}
0
\leq
\int_{0}^{\Tend}
\left((k*W)(t),W(t)\right)_{X}\diff t
\leq
\Lambda_k(\Tend)
\norm{W}_{L^{2}(0,\Tend;X)}^{2}.
\label{eq:vector-profile}
\end{align}
The constant $\Lambda_k(\Tend)$ is optimal.

\item
Suppose that $a_1\not\equiv0$. Then,
\begin{align}
\int_{0}^{\Tend}
a_1\left(u(t),u(t)\right)\diff t
-
\Dm[u](\Tend)
\geq0
\label{eq:fixed-gap-positive}
\end{align}
for any $u$ with $W\in L^{2}(0,\Tend;\Lp)$ if and only if
\begin{align}
\Lambda_k(\Tend)\leq1.
\label{eq:fixed-gap-criterion}
\end{align}
\end{enumerate}
\end{theorem}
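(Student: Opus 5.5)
We prove the five assertions in order. Each rests on two facts: the symmetrised representation \cref{eq:finite-form-operator}, and the non-negativity of the memory form from \cref{lem:posrep} applied with scalar fields.

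\emph{Assertion (i).} The operator $\mathcal{S}_{k,\Tend}$ has the symmetric, real, non-negative kernel $\tfrac12 k(|t-s|)$. By Schur's test (equivalently Young's inequality) its norm is bounded by
\begin{align*}
\sup_{t}\tfrac12\int_0^{\Tend}k(|t-s|)\diff s\leq\norm{k}_{L^1(0,\Tend)},
\end{align*}
which gives boundedness and self-adjointness. Non-negativity follows from \cref{lem:posrep} applied to $W=\phi\,e$, where $e$ is a fixed unit vector; the lemma holds verbatim for arbitrary $W$. For a non-negative self-adjoint operator, the supremum of the Rayleigh quotient equals the operator norm, which is \cref{eq:profile-operator-norm}. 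For strict positivity, take $\phi\equiv1$: then $\mathcal{Q}_{k,\Tend}[1]=\int_0^{\Tend}\int_0^t k>0$, since $k>0$ whenever $\nu\neq0$.

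\emph{Assertion (ii).} Monotonicity holds because extending $\phi\in L^2(0,\Tend)$ by zero to $(0,\Tend')$ with $\Tend'>\Tend$ preserves both $\mathcal{Q}$ and the norm. For left-continuity, fix $\phi$ on $(0,\Tend)$ with Rayleigh quotient near $\Lambda_k(\Tend)$. Its restriction to $(0,\Tend-\varepsilon)$ converges to $\phi$ in $L^2$, and $\mathcal{Q}$ is continuous in $L^2$ by boundedness. Hence $\lim_{\varepsilon\downarrow0}\Lambda_k(\Tend-\varepsilon)\ge\Lambda_k(\Tend)-\eta$ for every $\eta>0$, and monotonicity gives the reverse inequality.

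\emph{Assertion (iii).} The limit at $0$ is immediate from \cref{eq:profile-upper}, because $\norm{k}_{L^1(0,\Tend)}\to0$. As $\Tend\to\infty$, the upper bound $\Lambda_k(\Tend)\le\norm{k}_{L^1(0,\infty)}$ holds automatically. For the lower bound, test with $\phi\equiv\Tend^{-1/2}$ on $(0,\Tend)$:
\begin{align*}
\mathcal{Q}_{k,\Tend}[\phi]=\frac1{\Tend}\int_0^{\Tend}(\Tend-r)k(r)\diff r=\int_0^{\Tend}\Bigl(1-\frac r{\Tend}\Bigr)k(r)\diff r.
\end{align*}
By monotone convergence this tends to $\norm{k}_{L^1(0,\infty)}$, in the extended sense if the mass is infinite. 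This is the Cesàro/Fejér step, and it needs no Fourier transform.

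\emph{Assertion (iv).} Expand $W$ in an orthonormal basis $(e_j)$ of $X$ and write $W_j=(W,e_j)_X$. Then
\begin{align*}
\int_0^{\Tend}\left((k*W)(t),W(t)\right)_X\diff t=\sum_j\mathcal{Q}_{k,\Tend}[W_j],
\end{align*}
with the interchange of sum and integral justified by dominated convergence, using Young's inequality. Each term lies in $[0,\Lambda_k(\Tend)\norm{W_j}^2]$, and summing over $j$ gives \cref{eq:vector-profile}. Optimality follows by taking $W=\phi\,x$ with $x$ a unit vector and $\phi$ a near-maximiser.

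\emph{Assertion (v).} If $\Lambda_k(\Tend)\le1$, the gap is non-negative by (iv) with $X=\Lp$, since $\int a_1(u,u)=\norm{W}^2$. Conversely, suppose $\Lambda_k(\Tend)>1$. Pick $\phi$ with $\mathcal{Q}_{k,\Tend}[\phi]>\norm{\phi}^2$, and $w_0\in V$ with $c_0=a_1(w_0,w_0)>0$. The state $u(t)=\phi(t)w_0$ has energy field $W=\phi\,A_1^{1/2}\nabla w_0$, so its gap equals $c_0\bigl(\norm{\phi}^2-\mathcal{Q}_{k,\Tend}[\phi]\bigr)<0$.

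\emph{Main obstacle.} The most delicate step is (iii) in the infinite-mass case, together with the left-continuity in (ii). The first requires only monotone convergence in $\Tend$ of $(1-r/\Tend)_+k(r)$. The second requires care that truncation does not lose mass, which is handled as above by density and the uniform operator bound.
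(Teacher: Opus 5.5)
Your proposal is correct and follows essentially the same route as the paper. The parallels are: a Young/Schur bound plus the scalar internal-variable identity for (i), zero extension and truncation for (ii), the constant test function with monotone convergence for (iii), a coordinatewise reduction for (iv), and the product state $\phi w_{0}$ for the converse in (v). The only detail worth adding is in (iv), where $X$ need not be separable: as the paper does, first pass to a separable closed subspace containing the essential range of $W$, so that the orthonormal expansion is countable. Your dominated-convergence justification of the sum--integral interchange then goes through, just as the paper's projection-plus-Lipschitz argument does.
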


\begin{proof}
Fix $\Tend>0$.

\medskip\noindent\textit{Step 1: boundedness, self-adjointness, positivity, and \cref{eq:profile-operator-norm}.}
With the even kernel $h_{\Tend}(r):=\tfrac12 k(\lvert r\rvert)\chi_{(-\Tend,\Tend)}(r)$ one has $\norm{h_{\Tend}}_{L^{1}(\R)}=\norm{k}_{L^{1}(0,\Tend)}$, and, extending $\phi\in L^{2}(0,\Tend)$ by zero, $(\mathcal{S}_{k,\Tend}\phi)(t)=(h_{\Tend}*\phi)(t)$ for a.e.\ $t\in(0,\Tend)$ because $\lvert t-s\rvert<\Tend$ there. Young's convolution inequality then gives $\norm{\mathcal{S}_{k,\Tend}\phi}_{L^{2}(0,\Tend)}\leq\norm{k}_{L^{1}(0,\Tend)}\norm{\phi}_{L^{2}(0,\Tend)}$, so $\mathcal{S}_{k,\Tend}$ is bounded with $\norm{\mathcal{S}_{k,\Tend}}_{\mathcal{L}(L^{2}(0,\Tend))}\leq\norm{k}_{L^{1}(0,\Tend)}$. Fubini's theorem and the symmetry $k(\lvert t-s\rvert)=k(\lvert s-t\rvert)$ give $(\mathcal{S}_{k,\Tend}\phi,\psi)=(\phi,\mathcal{S}_{k,\Tend}\psi)$, so $\mathcal{S}_{k,\Tend}$ is self-adjoint. The scalar version of the internal-variable representation of \cref{lem:posrep},
\begin{align}
\mathcal{Q}_{k,\Tend}[\phi]
=
\int_{[0,\infty)}
\left[
\tfrac12 z_{\phi}(\lambda,\Tend)^{2}
+\lambda\int_{0}^{\Tend}z_{\phi}(\lambda,t)^{2}\diff t
\right]\diff\nu(\lambda),
\qquad
z_{\phi}(\lambda,t):=\int_{0}^{t}e^{-\lambda(t-s)}\phi(s)\diff s,
\label{eq:scalar-internal-profile}
\end{align}
has a non-negative integrand, so $(\mathcal{S}_{k,\Tend}\phi,\phi)_{L^{2}(0,\Tend)}=\mathcal{Q}_{k,\Tend}[\phi]\geq0$. For a bounded self-adjoint non-negative operator the norm equals the supremum of its Rayleigh quotient, which by \cref{eq:finite-form-operator} equals $\Lambda_{k}(\Tend)$; this proves \cref{eq:profile-operator-norm} and the upper bound in \cref{eq:profile-upper}. Finally, non-triviality gives $\nu\neq0$, hence $k(r)=\int_{[0,\infty)}e^{-\lambda r}\diff\nu(\lambda)>0$ for $r>0$ and $\mathcal{Q}_{k,\Tend}[1]=\int_{0}^{\Tend}(\Tend-r)k(r)\diff r>0$, so $\Lambda_{k}(\Tend)>0$. This proves \textnormal{(i)}.

\medskip\noindent\textit{Step 2: monotonicity and left-continuity.}
For $0<\Tend_{1}<\Tend_{2}$ and $\phi\in L^{2}(0,\Tend_{1})$, its zero-extension $\widetilde\phi\in L^{2}(0,\Tend_{2})$ satisfies $\norm{\widetilde\phi}_{L^{2}(0,\Tend_{2})}=\norm{\phi}_{L^{2}(0,\Tend_{1})}$ and, since the outer factor vanishes for $t\geq\Tend_{1}$, $\mathcal{Q}_{k,\Tend_{2}}[\widetilde\phi]=\mathcal{Q}_{k,\Tend_{1}}[\phi]$. Taking the supremum yields $\Lambda_{k}(\Tend_{1})\leq\Lambda_{k}(\Tend_{2})$. For left-continuity, let $\Tend_{n}\uparrow\Tend$ and $L:=\lim_{n}\Lambda_{k}(\Tend_{n})\leq\Lambda_{k}(\Tend)$. Fix $\phi\neq0$ in $L^{2}(0,\Tend)$ and set $\phi_{n}:=\phi\chi_{(0,\Tend_{n})}$, so $\phi_{n}\to\phi$ in $L^{2}(0,\Tend)$ and $\mathcal{Q}_{k,\Tend_{n}}[\phi_{n}]=\mathcal{Q}_{k,\Tend}[\phi_{n}]$. Boundedness of $\mathcal{S}_{k,\Tend}$ gives $\mathcal{Q}_{k,\Tend}[\phi_{n}]\to\mathcal{Q}_{k,\Tend}[\phi]$ and $\norm{\phi_{n}}_{L^{2}(0,\Tend_{n})}\to\norm{\phi}_{L^{2}(0,\Tend)}$, so
\begin{align*}
\frac{\mathcal{Q}_{k,\Tend}[\phi]}{\norm{\phi}_{L^{2}(0,\Tend)}^{2}}
=\lim_{n\to\infty}\frac{\mathcal{Q}_{k,\Tend_{n}}[\phi_{n}]}{\norm{\phi_{n}}_{L^{2}(0,\Tend_{n})}^{2}}\leq L.
\end{align*}
Taking the supremum over $\phi$ gives $\Lambda_{k}(\Tend)\leq L$, hence $\Lambda_{k}(\Tend_{n})\to\Lambda_{k}(\Tend)$. This proves \textnormal{(ii)}.

\medskip\noindent\textit{Step 3: short- and long-horizon limits.}
By \cref{eq:profile-upper}, $0\leq\Lambda_{k}(\Tend)\leq\int_{0}^{\Tend}k(r)\diff r\to0$ as $\Tend\downarrow0$ by absolute continuity of the Lebesgue integral, giving \cref{eq:profile-zero}. Testing the Rayleigh quotient with $\phi\equiv1$ and using $\int_{0}^{\Tend}\int_{0}^{t}k(t-s)\diff s\diff t=\int_{0}^{\Tend}(\Tend-r)k(r)\diff r$ gives
\begin{align*}
\Lambda_{k}(\Tend)\geq\int_{0}^{\Tend}(1-r/\Tend)\,k(r)\diff r\xrightarrow[\Tend\to\infty]{}\int_{0}^{\infty}k(r)\diff r=\norm{k}_{L^{1}(0,\infty)}
\end{align*}
by monotone convergence, the value $+\infty$ being allowed. Because also
\begin{align*}
\Lambda_{k}(\Tend)
\leq
\int_{0}^{\Tend}k(r)\diff r
\leq
\norm{k}_{L^{1}(0,\infty)},
\end{align*}
we obtain \cref{eq:profile-infinity}. This proves \textnormal{(iii)}.

\medskip\noindent\textit{Step 4: vector-valued fields, optimality, and the gap criterion.}
Let $X$ be a non-zero real Hilbert space, $W\in L^{2}(0,\Tend;X)$, and $\mathcal{Q}^{X}_{k,\Tend}[W]:=\int_{0}^{\Tend}((k*W)(t),W(t))_{X}\diff t$. Young's inequality for Bochner functions gives $\lvert\mathcal{Q}^{X}_{k,\Tend}[W]\rvert\leq\norm{k}_{L^{1}(0,\Tend)}\norm{W}_{L^{2}(0,\Tend;X)}^{2}$, and the same bilinear bound shows $\mathcal{Q}^{X}_{k,\Tend}$ is Lipschitz on bounded subsets of $L^{2}(0,\Tend;X)$. The essential range of $W$ lies in a separable subspace with orthonormal basis $(e_{j})_{j\geq1}$; writing $w_{j}:=(W,e_{j})_{X}$ and letting $P_{N}$ be the orthogonal projection onto $\operatorname{span}\{e_{1},\dots,e_{N}\}$, orthogonality and \textnormal{(i)} give, for each $N$,
\begin{align*}
0\leq\mathcal{Q}^{X}_{k,\Tend}[P_{N}W]=\sum_{j=1}^{N}\mathcal{Q}_{k,\Tend}[w_{j}]\leq\Lambda_{k}(\Tend)\sum_{j=1}^{N}\norm{w_{j}}_{L^{2}(0,\Tend)}^{2}=\Lambda_{k}(\Tend)\norm{P_{N}W}_{L^{2}(0,\Tend;X)}^{2}.
\end{align*}
Since $P_{N}W\to W$ in $L^{2}(0,\Tend;X)$, Lipschitz continuity yields \cref{eq:vector-profile}. Optimality follows by testing with $W(t)=\phi(t)e$ for a unit vector $e\in X$, since then $\mathcal{Q}^{X}_{k,\Tend}[W]=\mathcal{Q}_{k,\Tend}[\phi]$ and $\norm{W}^{2}_{L^{2}(0,\Tend;X)}=\norm{\phi}^{2}_{L^{2}(0,\Tend)}$: no constant below $\Lambda_{k}(\Tend)$ can serve in \cref{eq:vector-profile}. This proves \textnormal{(iv)}.

Finally, take $X=\Lp$ and $W=A_{1}^{1/2}\nabla u$, so that $\Dm[u](\Tend)=\mathcal{Q}^{\Lp}_{k,\Tend}[W]$. Then \cref{eq:vector-profile} gives $\Dm[u](\Tend)\leq\Lambda_{k}(\Tend)\int_{0}^{\Tend}a_{1}(u,u)\diff t$, so $\Lambda_{k}(\Tend)\leq1$ makes the gap \cref{eq:fixed-gap-positive} non-negative for every admissible $u$. Conversely, if $\Lambda_{k}(\Tend)>1$, choose a real-valued $\phi\in L^{2}(0,\Tend)\setminus\{0\}$ with $\mathcal{Q}_{k,\Tend}[\phi]>\norm{\phi}_{L^{2}(0,\Tend)}^{2}$ and $w_{0}\in V$ with $c_{0}:=a_{1}(w_{0},w_{0})>0$; then $u:=\phi w_{0}$ satisfies
\begin{align*}
\int_{0}^{\Tend}a_{1}(u,u)\diff t-\Dm[u](\Tend)=c_{0}\bigl(\norm{\phi}_{L^{2}(0,\Tend)}^{2}-\mathcal{Q}_{k,\Tend}[\phi]\bigr)<0.
\end{align*}
Hence the gap is non-negative for every state if and only if $\Lambda_{k}(\Tend)\leq1$. This proves \textnormal{(v)} and completes the proof.
\end{proof}

\begin{corollary}[Critical horizon]
\label{cor:critical-horizon}
Assume that $a_1\not\equiv0$ and
\begin{align*}
\norm{k}_{L^{1}(0,\infty)}>1,
\end{align*}
where the left-hand side may be infinite. We define
\begin{align}
T_{\ast}
:=
\sup
\left\{
\Tend>0:
\Lambda_k(\Tend)\leq1
\right\}.
\label{eq:critical-horizon}
\end{align}
Then,
\begin{align}
0<T_{\ast}<\infty.
\label{eq:critical-finite}
\end{align}
Furthermore,
\begin{align}
\Lambda_k(\Tend)\leq1
&\quad
\text{for }0<\Tend\leq T_{\ast},
\label{eq:critical-below}
\\
\Lambda_k(\Tend)>1
&\quad
\text{for }\Tend>T_{\ast}.
\label{eq:critical-above}
\end{align}
Consequently, the coercivity gap is non-negative for every state when $0<\Tend\leq T_{\ast}$, whereas for any $\Tend>T_{\ast}$ there exists an admissible state with strictly negative gap.
\end{corollary}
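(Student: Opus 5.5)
The corollary follows from \cref{thm:finite-profile}, items (ii), (iii) and (v); no new estimates are needed. Throughout, write $S:=\{\Tend>0:\Lambda_k(\Tend)\leq1\}$, so that $T_\ast=\sup S$.

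\emph{Positivity and finiteness of $T_\ast$.} By \cref{eq:profile-zero}, $\Lambda_k(\Tend)\to0$ as $\Tend\downarrow0$. Hence $\Lambda_k(\Tend)\leq1$ for all sufficiently small $\Tend$, so $S$ is non-empty and $T_\ast>0$. By \cref{eq:profile-infinity}, $\Lambda_k(\Tend)\to\norm{k}_{L^{1}(0,\infty)}>1$; this holds also when the limit is $+\infty$. So there is $T_1$ with $\Lambda_k(\Tend)>1$ for all $\Tend\geq T_1$. By monotonicity (item (ii)) it is enough to exhibit one such $T_1$. Then $S\subset(0,T_1)$ and $T_\ast\leq T_1<\infty$, which gives \cref{eq:critical-finite}.

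\emph{The dichotomy.} Since $\Lambda_k$ is non-decreasing, $S$ is an interval with left endpoint $0$: if $\Tend\in S$ and $\Tend'<\Tend$, then $\Lambda_k(\Tend')\leq\Lambda_k(\Tend)\leq1$. Hence every $\Tend<T_\ast$ lies in $S$. For the endpoint itself, take $\Tend_n\uparrow T_\ast$ with $\Tend_n\in S$. Left-continuity from item (ii) gives $\Lambda_k(T_\ast)=\lim_n\Lambda_k(\Tend_n)\leq1$, which establishes \cref{eq:critical-below}. For $\Tend>T_\ast$ we have $\Tend\notin S$ by the definition of the supremum, so $\Lambda_k(\Tend)>1$, which is \cref{eq:critical-above}.

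\emph{Gap consequences.} Apply the criterion \cref{thm:finite-profile}\,(v) at each fixed horizon, using $a_1\not\equiv0$. For $\Tend\leq T_\ast$ the gap is non-negative for every admissible $u$. For $\Tend>T_\ast$, the converse part of (v) supplies the explicit state $u=\phi w_0$ with strictly negative gap.

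The only delicate point is the closedness of $S$ at $T_\ast$, that is, that the inequality is non-strict at the critical horizon. This depends on the left-continuity in item (ii) and not merely on monotonicity. Everything else is a direct read-off.
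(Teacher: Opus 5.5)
Your proof is correct and follows the paper's argument essentially step for step. You use the short- and long-horizon limits of \cref{thm:finite-profile}\,(iii) for $0<T_\ast<\infty$, monotonicity for downward closedness of the sublevel set, left-continuity to include the endpoint $T_\ast$, and the fixed-horizon criterion (v) for the gap statements, and you rightly single out left-continuity, not monotonicity alone, as what makes the inequality non-strict at $T_\ast$.
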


\begin{proof}
We set
\begin{align*}
\mathcal{A}
:=
\left\{
\Tend>0:
\Lambda_k(\Tend)\leq1
\right\}.
\end{align*}
By \cref{eq:profile-zero}, there exists $\Tend_0>0$ such that
\begin{align*}
\Lambda_k(\Tend)<1
\quad
\text{for }0<\Tend\leq\Tend_0.
\end{align*}
Thus, $\mathcal{A}$ is non-empty and
\begin{align*}
T_{\ast}\geq\Tend_0>0.
\end{align*}
On the other hand, by \cref{eq:profile-infinity},
\begin{align*}
\Lambda_k(\Tend)
\longrightarrow
\norm{k}_{L^{1}(0,\infty)}>1
\quad
\text{as }\Tend\to\infty.
\end{align*}
Therefore, there exists $\Tend_1>0$ such that
\begin{align*}
\Lambda_k(\Tend_1)>1.
\end{align*}
By monotonicity,
\begin{align*}
\Lambda_k(\Tend)>1
\quad
\text{for every }\Tend\geq\Tend_1.
\end{align*}
Therefore,
\begin{align*}
T_{\ast}\leq\Tend_1<\infty.
\end{align*}
This proves \cref{eq:critical-finite}.

The monotonicity of $\Lambda_k$ also shows that $\mathcal{A}$ is downward closed: if $\Tend_2\in\mathcal{A}$ and $0<\Tend_1<\Tend_2$, then
\begin{align*}
\Lambda_k(\Tend_1)
\leq
\Lambda_k(\Tend_2)
\leq1,
\end{align*}
and therefore $\Tend_1\in\mathcal{A}$. It follows from the definition of $T_{\ast}$ that
\begin{align*}
\Lambda_k(\Tend)\leq1
\quad
\text{for every }0<\Tend<T_{\ast}.
\end{align*}
Choose any sequence $\Tend_n\uparrow T_{\ast}$ with
$\Tend_n<T_{\ast}$. Then
\begin{align*}
\Lambda_k(\Tend_n)\leq1
\quad
\text{for every }n.
\end{align*}
By the left-continuity proved in
\cref{thm:finite-profile},
\begin{align*}
\Lambda_k(T_{\ast})
=
\lim_{n\to\infty}\Lambda_k(\Tend_n)
\leq1.
\end{align*}
Thus, \cref{eq:critical-below} holds also at the endpoint
$\Tend=T_{\ast}$.
Finally, let $\Tend>T_{\ast}$. If $\Lambda_k(\Tend)\leq1$, then $\Tend\in\mathcal{A}$, contradicting the definition of $T_{\ast}$ as the supremum of $\mathcal{A}$. Therefore,
\begin{align*}
\Lambda_k(\Tend)>1.
\end{align*}
This proves \cref{eq:critical-above}.

The assertions concerning the sign of the gap now follow directly from \cref{thm:finite-profile}.
\end{proof}

\begin{corollary}[Fractional critical horizon]
\label{cor:fractional-horizon}
Let
\begin{align*}
k_{\alpha}(t)
=
\frac{t^{-\alpha}}{\Gamma(1-\alpha)},
\quad
0<\alpha<1.
\end{align*}
Then, for any $\Tend>0$,
\begin{align}
\Lambda_{k_{\alpha}}(\Tend)
=
\Tend^{1-\alpha}
\Lambda_{k_{\alpha}}(1).
\label{eq:fractional-profile-scaling}
\end{align}
In particular,
\begin{align}
T_{\ast,\alpha}
:=
\Lambda_{k_{\alpha}}(1)^{-1/(1-\alpha)}
\in(0,\infty)
\label{eq:fractional-critical-horizon}
\end{align}
is the unique critical horizon satisfying
\begin{align*}
\Lambda_{k_{\alpha}}(T_{\ast,\alpha})=1.
\end{align*}
If $a_1\not\equiv0$, the coercivity gap is non-negative for every
state on $(0,\Tend)$ if and only if
\begin{align*}
0<\Tend\leq T_{\ast,\alpha}.
\end{align*}
\end{corollary}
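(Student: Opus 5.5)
The plan is to prove the scaling identity \cref{eq:fractional-profile-scaling} by a dilation of the time variable in the Rayleigh quotient \cref{eq:finite-profile}, and then read off the critical horizon from \cref{thm:finite-profile} and \cref{cor:critical-horizon}.

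\emph{Scaling.} Given $\phi\in L^{2}(0,\Tend)$, define $\psi(\tau):=\phi(\Tend\tau)$ for $\tau\in(0,1)$. This map is a bijection of $L^{2}(0,\Tend)$ onto $L^{2}(0,1)$, and $\norm{\phi}_{L^{2}(0,\Tend)}^{2}=\Tend\norm{\psi}_{L^{2}(0,1)}^{2}$. In \cref{eq:finite-quadratic}, substitute $t=\Tend\tau$ and $s=\Tend\sigma$, and use the homogeneity $k_{\alpha}(\Tend r)=\Tend^{-\alpha}k_{\alpha}(r)$. This gives
\begin{align*}
\mathcal{Q}_{k_{\alpha},\Tend}[\phi]=\Tend^{2-\alpha}\,\mathcal{Q}_{k_{\alpha},1}[\psi].
\end{align*}
The Rayleigh quotient therefore scales by $\Tend^{1-\alpha}$. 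Taking suprema over the bijectively corresponding sets gives \cref{eq:fractional-profile-scaling}. All integrals involved are finite because $k_{\alpha}$ is locally integrable, so the substitution is legitimate by Tonelli's theorem.

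\emph{Critical horizon.} By \cref{thm:finite-profile}\,(i), $0<\Lambda_{k_{\alpha}}(1)<\infty$, since it is bounded by $\norm{k_{\alpha}}_{L^{1}(0,1)}=1/\Gamma(2-\alpha)$. Because $1-\alpha>0$, the map $\Tend\mapsto\Tend^{1-\alpha}\Lambda_{k_{\alpha}}(1)$ is a continuous, strictly increasing bijection of $(0,\infty)$ onto itself. Hence there is exactly one $\Tend$ with $\Lambda_{k_{\alpha}}(\Tend)=1$, namely the value in \cref{eq:fractional-critical-horizon}, and
\begin{align*}
\Lambda_{k_{\alpha}}(\Tend)\leq1 \quad\Longleftrightarrow\quad \Tend\leq T_{\ast,\alpha}.
\end{align*}
This agrees with the supremum definition \cref{eq:critical-horizon}, which applies because $\norm{k_{\alpha}}_{L^{1}(0,\infty)}=\infty>1$ by \cref{ex:kernels}.

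\emph{Gap criterion.} The final claim follows from \cref{thm:finite-profile}\,(v): when $a_{1}\not\equiv0$, the gap is non-negative on $(0,\Tend)$ exactly when $\Lambda_{k_{\alpha}}(\Tend)\leq1$, that is, exactly when $\Tend\leq T_{\ast,\alpha}$.

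The only point needing care is the change of variables in the double integral and the verification that it maps admissible test functions bijectively. That step is routine, so I expect no real obstacle here.
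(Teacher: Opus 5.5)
Your proposal is correct and follows essentially the same route as the paper: a time dilation in the Rayleigh quotient using the homogeneity $k_{\alpha}(\Tend r)=\Tend^{-\alpha}k_{\alpha}(r)$, then uniqueness from the strict monotonicity of $\Tend\mapsto\Tend^{1-\alpha}\Lambda_{k_{\alpha}}(1)$, and finally the gap criterion of \cref{thm:finite-profile}\,(v). The paper works with the unitary map $(U_{\Tend}\psi)(t)=\Tend^{-1/2}\psi(t/\Tend)$ rather than your unnormalised dilation, but this difference is purely cosmetic. One small correction: since $\phi$ may change sign, the change of variables in the double integral is justified by Fubini's theorem together with the absolute bound from Young's inequality, not by Tonelli's theorem alone.
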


\begin{proof}
For $\Tend>0$, we define
\begin{align*}
U_{\Tend}
:
L^{2}(0,1)
\longrightarrow
L^{2}(0,\Tend)
\end{align*}
as
\begin{align*}
(U_{\Tend}\psi)(t)
:=
\Tend^{-1/2}
\psi\left(\frac{t}{\Tend}\right).
\end{align*}
The map $U_{\Tend}$ is unitary. Indeed, with $t=\Tend\tau$,
\begin{align*}
\norm{U_{\Tend}\psi}_{L^{2}(0,\Tend)}^{2}
&=
\int_{0}^{\Tend}
\Tend^{-1}
\left|
\psi\left(\frac{t}{\Tend}\right)
\right|^{2}
\diff t
\\
&=
\int_{0}^{1}
\lvert\psi(\tau)\rvert^{2}\diff\tau
=
\norm{\psi}_{L^{2}(0,1)}^{2}.
\end{align*}
Let $\psi\in L^{2}(0,1)$. Using
\begin{align*}
k_{\alpha}\left(\Tend(\tau-\sigma)\right)
=
\Tend^{-\alpha}
k_{\alpha}(\tau-\sigma)
\quad
\text{for }0<\sigma<\tau<1,
\end{align*}
and making the changes of variables
\begin{align*}
t=\Tend\tau,
\quad
s=\Tend\sigma,
\end{align*}
we obtain
\begin{align*}
&
\mathcal{Q}_{k_{\alpha},\Tend}
[U_{\Tend}\psi]
\\
&=
\int_{0}^{\Tend}
\Tend^{-1/2}
\psi\left(\frac{t}{\Tend}\right)
\int_{0}^{t}
\frac{(t-s)^{-\alpha}}{\Gamma(1-\alpha)}
\Tend^{-1/2}
\psi\left(\frac{s}{\Tend}\right)
\diff s\diff t
\\
&=
\Tend^{1-\alpha}
\int_{0}^{1}
\psi(\tau)
\int_{0}^{\tau}
\frac{(\tau-\sigma)^{-\alpha}}
{\Gamma(1-\alpha)}
\psi(\sigma)
\diff\sigma\diff\tau
\\
&=
\Tend^{1-\alpha}
\mathcal{Q}_{k_{\alpha},1}[\psi].
\end{align*}
Because $U_{\Tend}$ is a bijective isometry,
\begin{align*}
\Lambda_{k_{\alpha}}(\Tend)
&=
\sup_{\psi\in L^{2}(0,1)\setminus\{0\}}
\frac{
\mathcal{Q}_{k_{\alpha},\Tend}
[U_{\Tend}\psi]
}{
\norm{U_{\Tend}\psi}_{L^{2}(0,\Tend)}^{2}
}
\\
&=
\Tend^{1-\alpha}
\sup_{\psi\in L^{2}(0,1)\setminus\{0\}}
\frac{
\mathcal{Q}_{k_{\alpha},1}[\psi]
}{
\norm{\psi}_{L^{2}(0,1)}^{2}
}
\\
&=
\Tend^{1-\alpha}
\Lambda_{k_{\alpha}}(1).
\end{align*}
This proves \cref{eq:fractional-profile-scaling}.

By \cref{thm:finite-profile},
\begin{align*}
0<\Lambda_{k_{\alpha}}(1)<\infty.
\end{align*}
Because $1-\alpha>0$, the equation
\begin{align*}
\Lambda_{k_{\alpha}}(\Tend)=1
\end{align*}
has the unique solution
\begin{align*}
\Tend
=
\Lambda_{k_{\alpha}}(1)^{-1/(1-\alpha)}
=
T_{\ast,\alpha}.
\end{align*}
Furthermore,
\begin{align*}
\Lambda_{k_{\alpha}}(\Tend)\leq1
\quad\Longleftrightarrow\quad
\Tend^{1-\alpha}
\Lambda_{k_{\alpha}}(1)\leq1
\quad\Longleftrightarrow\quad
\Tend\leq T_{\ast,\alpha}.
\end{align*}
The fixed-horizon gap criterion of \cref{thm:finite-profile} completes the proof.
\end{proof}

\begin{remark}[Interpretation]\label{rem:interp}
For the exponential kernel $k(t)=\gamma e^{-\gamma t}$,
\begin{align*}
\displaystyle
m(\omega)
=
\frac{\gamma^{2}}{\gamma^{2}+\omega^{2}},
\quad
1-m(\omega)
=
\frac{\omega^{2}}{\gamma^{2}+\omega^{2}}.
\end{align*}
Extend $W$ by zero outside $[0,\Tend]$, and let $Z$ denote the corresponding causal whole-line solution of
\begin{align*}
\displaystyle
\partial_t Z+\gamma Z=W.
\end{align*}
Then,
\begin{align*}
\displaystyle
\widehat{\partial_t Z}(\omega)
=
\frac{i\omega}{\gamma+i\omega}\widehat W(\omega),
\end{align*}
and Plancherel's theorem gives
\begin{align*}
\displaystyle
\frac{1}{2\pi}
\int_{\R}
\left(1-m(\omega)\right)
\norm{\widehat W(\omega)}_{\LpC}^{2}
\diff\omega
=
\int_{\R}
\norm{\partial_t Z(t)}_{\Lp}^{2}
\diff t.
\end{align*}
For $t>\Tend$, one has
\begin{align*}
\displaystyle
Z(t)=e^{-\gamma(t-\Tend)}Z(\Tend),
\end{align*}
and therefore
\begin{align*}
\displaystyle
\int_{\Tend}^{\infty}
\norm{\partial_t Z(t)}_{\Lp}^{2}
\diff t
=
\frac{\gamma}{2}
\norm{Z(\Tend)}_{\Lp}^{2}.
\end{align*}
Therefore,
\begin{align*}
\displaystyle
\int_{\R}
\norm{\partial_t Z(t)}_{\Lp}^{2}
\diff t
=
\int_{0}^{\Tend}
\norm{\partial_t Z(t)}_{\Lp}^{2}
\diff t
+
\frac{\gamma}{2}
\norm{Z(\Tend)}_{\Lp}^{2},
\end{align*}
so the frequency-domain identity \cref{eq:gap} reduces exactly to the
time-domain identity \cref{eq:expgap}.
\end{remark}

\subsection{The no-go theorem}
To treat also kernels of infinite $L^{1}$-mass, we now return to the internal-variable representation rather than using the frequency identity of \cref{prop:freq}. The next lemma estimates the response of each relaxation mode to a rapidly oscillating input and shows that the memory dissipation vanishes in the high-frequency limit.

\begin{lemma}[Scalar internal-variable bound]\label{lem:scalar}
Let $\lambda\geq0$ and $\omega>0$, and let $\zeta_{\omega}(\lambda,\cdot)$ be the solution of
\begin{align*}
\displaystyle
\partial_t\zeta_{\omega}(\lambda,t)
+
\lambda\zeta_{\omega}(\lambda,t)
=
\cos(\omega t),
\quad
\zeta_{\omega}(\lambda,0)=0.
\end{align*}
Then,
\begin{align}\label{eq:scalar-pointwise}
\frac12
\zeta_{\omega}(\lambda,\Tend)^2
+
\lambda
\int_0^{\Tend}
\zeta_{\omega}(\lambda,t)^2\diff t
\leq
\frac{3+2\Tend\lambda}
{\lambda^2+\omega^2}.
\end{align}
Consequently, for every locally integrable completely monotone kernel with representing measure $\nu$,
\begin{align}\label{eq:scalar-integrated}
&\int_{[0,\infty)}
\left[
\frac12
\zeta_{\omega}(\lambda,\Tend)^2
+
\lambda
\int_0^{\Tend}
\zeta_{\omega}(\lambda,t)^2\diff t
\right]
\diff\nu(\lambda)
\nonumber\\
&\quad \leq
\frac{3\nu([0,1))}{\omega^2}
+
(3+2\Tend)m(\omega)
\longrightarrow 0
\quad
\text{as }\omega\to\infty.
\end{align}
\end{lemma}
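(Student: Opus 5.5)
I will solve the scalar ODE explicitly and estimate the two terms separately. By variation of constants,
\begin{align*}
\zeta_{\omega}(\lambda,t)=\frac{\lambda\cos(\omega t)+\omega\sin(\omega t)-\lambda e^{-\lambda t}}{\lambda^{2}+\omega^{2}},
\end{align*}
which I will check by direct differentiation: the particular solution is $\Re\bigl(e^{i\omega t}/(\lambda+i\omega)\bigr)$ and the homogeneous correction enforces $\zeta_{\omega}(\lambda,0)=0$. The case $\lambda=0$ is included, since then $\zeta_{\omega}=\sin(\omega t)/\omega$.

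For the pointwise bound, I split $\zeta_{\omega}=p+h$, where
\begin{align*}
p(t)=\frac{\lambda\cos(\omega t)+\omega\sin(\omega t)}{\lambda^{2}+\omega^{2}},
\qquad
h(t)=-\frac{\lambda e^{-\lambda t}}{\lambda^{2}+\omega^{2}}.
\end{align*}
By Cauchy--Schwarz, $|p(t)|\leq(\lambda^{2}+\omega^{2})^{-1/2}$ for all $t$, so $p(t)^{2}\leq(\lambda^{2}+\omega^{2})^{-1}$. Also $|h(t)|\leq\lambda/(\lambda^{2}+\omega^{2})\leq(\lambda^{2}+\omega^{2})^{-1/2}$. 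Hence $\zeta_{\omega}^{2}\leq2p^{2}+2h^{2}$, and the endpoint term satisfies $\tfrac12\zeta_{\omega}(\lambda,\Tend)^{2}\leq 2/(\lambda^{2}+\omega^{2})$.

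For the integral term, I write
\begin{align*}
\lambda\int_{0}^{\Tend}\zeta_{\omega}^{2}\,\diff t
\leq 2\lambda\int_{0}^{\Tend}p^{2}\,\diff t+2\lambda\int_{0}^{\Tend}h^{2}\,\diff t.
\end{align*}
The first piece is at most $2\lambda\Tend/(\lambda^{2}+\omega^{2})$. For the second,
\begin{align*}
2\lambda\int_{0}^{\infty}h^{2}\,\diff t=\frac{2\lambda^{3}}{(\lambda^{2}+\omega^{2})^{2}}\cdot\frac{1}{2\lambda}=\frac{\lambda^{2}}{(\lambda^{2}+\omega^{2})^{2}}\leq\frac{1}{\lambda^{2}+\omega^{2}}.
\end{align*}
Summing gives $(2+2\Tend\lambda+1)/(\lambda^{2}+\omega^{2})$, which is exactly \cref{eq:scalar-pointwise}. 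The only real obstacle is this constant bookkeeping. If the crude splitting $\zeta^{2}\leq2p^{2}+2h^{2}$ overshoots the constant $3$, I will instead expand $\zeta^{2}=p^{2}+2ph+h^{2}$ and bound the cross term using $|p|\,|h|$. Alternatively, I will use the energy identity obtained by multiplying the ODE by $\zeta$,
\begin{align*}
\tfrac12\zeta(\Tend)^{2}+\lambda\int_{0}^{\Tend}\zeta^{2}\,\diff t=\int_{0}^{\Tend}\zeta\cos(\omega t)\,\diff t,
\end{align*}
and then integrate the right-hand side explicitly. That route gives a sharper bound and leaves room in the constant.

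For the integrated bound \cref{eq:scalar-integrated}, I integrate \cref{eq:scalar-pointwise} against $\nu$ after splitting at $\lambda=1$.
\begin{itemize}[leftmargin=1.6em]
\item On $[0,1)$, I use $(3+2\Tend\lambda)/(\lambda^{2}+\omega^{2})\leq3/\omega^{2}+2\Tend\lambda/(\lambda^{2}+\omega^{2})$.
\item On $[1,\infty)$, I use $3/(\lambda^{2}+\omega^{2})\leq3\lambda/(\lambda^{2}+\omega^{2})$, since $\lambda\geq1$.
\end{itemize}
Collecting terms gives the bound $3\nu([0,1))/\omega^{2}+(3+2\Tend)\,m(\omega)$, recalling \cref{eq:symbol} and $\nu([0,1))<\infty$ from the proof of \cref{lem:symbol}. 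The right-hand side tends to $0$ as $\omega\to\infty$ by the high-frequency decay in \cref{lem:symbol}.
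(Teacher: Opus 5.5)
Your proposal is correct and follows the paper's proof essentially line by line. It uses the same explicit solution split into an oscillatory part and an exponential correction, the bound $\zeta^{2}\leq 2p^{2}+2h^{2}$ with Cauchy--Schwarz on the oscillatory part, $\int_{0}^{\Tend}e^{-2\lambda t}\,\diff t\leq(2\lambda)^{-1}$ for the correction, and the same split of $\nu$ at $\lambda=1$ for the integrated bound. Your computation already lands exactly on the constant $3+2\Tend\lambda$, so the fallback routes are unnecessary. Just note, as the paper does, that for $\lambda=0$ the exponential term vanishes identically, so the factor $(2\lambda)^{-1}$ is never needed there.
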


\begin{proof}
Fix $\lambda\geq0$ and $\omega>0$. Solving the scalar initial-value problem gives
\begin{align}\label{eq:zeta-explicit}
\zeta_{\omega}(\lambda,t)
=
\frac{
\lambda\cos(\omega t)
+
\omega\sin(\omega t)
}
{\lambda^2+\omega^2}
-
\frac{\lambda}
{\lambda^2+\omega^2}
e^{-\lambda t}.
\end{align}
We estimate separately the oscillatory term and the exponentially decaying term. We set
\begin{align*}
\displaystyle
X(t)
:=
\frac{\lambda\cos(\omega t)+\omega\sin(\omega t)}
{\lambda^{2}+\omega^{2}},
\quad
Y(t)
:=
\frac{\lambda}{\lambda^{2}+\omega^{2}}e^{-\lambda t},
\end{align*}
which leads to
\begin{align*}
\displaystyle
\zeta_{\omega}(\lambda,t)=X(t)-Y(t).
\end{align*}
From the Cauchy--Schwarz inequality,
\begin{align*}
\displaystyle
\left|
\lambda\cos(\omega t)+\omega\sin(\omega t)
\right|
\leq
(\lambda^{2}+\omega^{2})^{1/2},
\end{align*}
and therefore
\begin{align*}
\displaystyle
X(t)^{2}\leq\frac{1}{\lambda^{2}+\omega^{2}}.
\end{align*}
Using
\begin{align*}
\displaystyle
(X-Y)^{2}\leq2X^{2}+2Y^{2},
\end{align*}
we have
\begin{align}\label{eq:zeta-square}
\zeta_{\omega}(\lambda,t)^{2}
\leq
\frac{2}{\lambda^{2}+\omega^{2}}
+
\frac{2\lambda^{2}}
{(\lambda^{2}+\omega^{2})^{2}}
e^{-2\lambda t}.
\end{align}
Evaluating \cref{eq:zeta-square} at $t=\Tend$, and using $\lambda^2\leq\lambda^2+\omega^2$, we have
\begin{align*}
\frac12\zeta_{\omega}(\lambda,\Tend)^2
&\leq
\frac{1}{\lambda^2+\omega^2}
+
\frac{\lambda^2}
{(\lambda^2+\omega^2)^2}
e^{-2\lambda\Tend} \leq
\frac{2}{\lambda^2+\omega^2}.
\end{align*}
We next estimate the integral term. Multiplying \cref{eq:zeta-square} by $\lambda$ and integrating over $(0,\Tend)$ yields
\begin{align*}
\lambda
\int_0^{\Tend}
\zeta_{\omega}(\lambda,t)^2\diff t
&\leq
\frac{2\Tend\lambda}
{\lambda^2+\omega^2}
+
\frac{2\lambda^3}
{(\lambda^2+\omega^2)^2}
\int_0^{\Tend}e^{-2\lambda t}\diff t.
\end{align*}
If $\lambda>0$, then
\begin{align*}
\displaystyle
\int_0^{\Tend}e^{-2\lambda t}\diff t
\leq
\frac{1}{2\lambda},
\end{align*}
and therefore
\begin{align*}
\frac{2\lambda^3}
{(\lambda^2+\omega^2)^2}
\int_0^{\Tend}e^{-2\lambda t}\diff t
&\leq
\frac{\lambda^2}
{(\lambda^2+\omega^2)^2}
\leq
\frac{1}{\lambda^2+\omega^2}.
\end{align*}
For $\lambda=0$, the left-hand side is zero, so the same final estimate remains valid. Therefore,
\begin{align*}
\lambda
\int_0^{\Tend}
\zeta_{\omega}(\lambda,t)^2\diff t
\leq
\frac{1+2\Tend\lambda}
{\lambda^2+\omega^2}.
\end{align*}
Adding the two estimates proves \cref{eq:scalar-pointwise}.

We integrate \cref{eq:scalar-pointwise} with respect to $\nu$. We have
\begin{align*}
&\int_{[0,\infty)}
\left[
\frac12
\zeta_{\omega}(\lambda,\Tend)^2
+
\lambda
\int_0^{\Tend}
\zeta_{\omega}(\lambda,t)^2\diff t
\right]
\diff\nu(\lambda)
\\
&\quad \leq
3
\int_{[0,\infty)}
\frac{1}{\lambda^2+\omega^2}
\diff\nu(\lambda)
+
2\Tend
\int_{[0,\infty)}
\frac{\lambda}{\lambda^2+\omega^2}
\diff\nu(\lambda)
\\
&\quad =
3
\int_{[0,\infty)}
\frac{1}{\lambda^2+\omega^2}
\diff\nu(\lambda)
+
2\Tend m(\omega).
\end{align*}
To estimate the first integral, split the range of $\lambda$ as
\begin{align*}
\displaystyle
[0,\infty)=[0,1)\cup[1,\infty).
\end{align*}
For $0\leq\lambda<1$,
\begin{align*}
\displaystyle
\frac{1}{\lambda^2+\omega^2}
\leq
\frac{1}{\omega^2},
\end{align*}
whereas for $\lambda\geq1$,
\begin{align*}
\displaystyle
\frac{1}{\lambda^2+\omega^2}
\leq
\frac{\lambda}{\lambda^2+\omega^2}.
\end{align*}
Therefore,
\begin{align*}
\int_{[0,\infty)}
\frac{1}{\lambda^2+\omega^2}
\diff\nu(\lambda)
&\leq
\frac{\nu([0,1))}{\omega^2}
+
\int_{[1,\infty)}
\frac{\lambda}{\lambda^2+\omega^2}
\diff\nu(\lambda)
\leq
\frac{\nu([0,1))}{\omega^2}
+
m(\omega).
\end{align*}
It follows that
\begin{align*}
&\int_{[0,\infty)}
\left[
\frac12
\zeta_{\omega}(\lambda,\Tend)^2
+
\lambda
\int_0^{\Tend}
\zeta_{\omega}(\lambda,t)^2\diff t
\right]
\diff\nu(\lambda)
\leq
\frac{3\nu([0,1))}{\omega^2}
+
(3+2\Tend)m(\omega).
\end{align*}
The measure $\nu$ is finite on $[0,1)$ (as established in the proof of \cref{lem:symbol}), and $m(\omega)\to0$ as $\omega\to\infty$ by \cref{lem:symbol}. Thus the right-hand side tends to zero, proving \cref{eq:scalar-integrated}.
\end{proof}

We lift the scalar estimate of \cref{lem:scalar} to the full energy field by combining the oscillatory input with a fixed non-degenerate spatial profile. The instantaneous energy remains bounded away from zero, while the memory dissipation vanishes in the high-frequency limit, yielding the following no-go theorem.

\begin{theorem}[No uniform coercivity from memory]\label{thm:nogo}
Let $\Tend>0$, and let $k$ be any locally integrable completely monotone kernel. Suppose that $a_1$ is not identically zero; equivalently, assume that there exists $w_0\in V$ such that $a_1(w_0,w_0)>0$. Then,
\begin{align}\label{eq:nogo-inf}
\inf_u
\frac{\Dm[u](\Tend)}
{\displaystyle
 \int_0^{\Tend}a_1\left(u(t),u(t)\right)\diff t}
=
0,
\end{align}
where the infimum is taken over all $u$ whose associated energy field satisfies $0\neq W\in L^2(0,\Tend;\Lp)$. Consequently, there is no constant $c>0$ such that
\begin{align*}
\Dm[u](\Tend)
\geq
c
\int_0^{\Tend}
a_1\left(u(t),u(t)\right)\diff t
\end{align*}
for every such $u$. In particular, if $a_1$ is coercive on $V$, then there is no constant $c>0$ such that
\begin{align*}
\Dm[u](\Tend)
\geq
c\norm{u}_{L^2(0,\Tend;V)}^2
\end{align*}
for every admissible $u$.
\end{theorem}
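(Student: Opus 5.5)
The proof will use a single test family: a rapidly oscillating time amplitude multiplied by a fixed spatial profile. Fix $w_{0}\in V$ with $c_{0}:=a_{1}(w_{0},w_{0})>0$. For $\omega>0$ set
\begin{align*}
u_{\omega}(t):=\cos(\omega t)\,w_{0},
\end{align*}
so that $W_{\omega}(t)=\cos(\omega t)\,A_{1}^{1/2}\nabla w_{0}$. This field lies in $L^{2}(0,\Tend;\Lp)$ and is non-zero. The internal variable separates as $Z(\lambda,t)=\zeta_{\omega}(\lambda,t)\,A_{1}^{1/2}\nabla w_{0}$, where $\zeta_{\omega}$ is the scalar solution of \cref{lem:scalar}. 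This holds because both sides solve \cref{eq:internal} with zero initial datum, or directly from the defining integral of $Z$.

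\textbf{Numerator.} Insert this into the internal-variable representation \cref{eq:posrep} of \cref{lem:posrep}. The $\Lp$-norms factor out $c_{0}$, giving
\begin{align*}
\Dm[u_{\omega}](\Tend)=c_{0}\int_{[0,\infty)}\Bigl[\tfrac12\zeta_{\omega}(\lambda,\Tend)^{2}+\lambda\int_{0}^{\Tend}\zeta_{\omega}(\lambda,t)^{2}\diff t\Bigr]\diff\nu(\lambda).
\end{align*}
By \cref{eq:scalar-integrated}, this is bounded by $c_{0}\bigl(3\nu([0,1))\omega^{-2}+(3+2\Tend)m(\omega)\bigr)$, which tends to $0$ as $\omega\to\infty$. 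This route never uses the Fourier transform of $k$, so it covers kernels of infinite mass.

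\textbf{Denominator.} We have
\begin{align*}
\int_{0}^{\Tend}a_{1}(u_{\omega},u_{\omega})\diff t=c_{0}\int_{0}^{\Tend}\cos^{2}(\omega t)\diff t=c_{0}\Bigl(\tfrac{\Tend}{2}+\tfrac{\sin(2\omega\Tend)}{4\omega}\Bigr),
\end{align*}
which is at least $c_{0}\bigl(\Tend/2-1/(4\omega)\bigr)\geq c_{0}\Tend/4$ once $\omega\geq1/\Tend$. Dividing, the quotient is at most
\begin{align*}
\frac{4}{\Tend}\Bigl(\frac{3\nu([0,1))}{\omega^{2}}+(3+2\Tend)\,m(\omega)\Bigr)\longrightarrow0 .
\end{align*}
Since $\Dm\geq0$ by \cref{lem:posrep}, the infimum in \cref{eq:nogo-inf} equals $0$.

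\textbf{Consequences.} If some $c>0$ gave $\Dm[u]\geq c\int_{0}^{\Tend}a_{1}(u,u)\diff t$ for every admissible $u$, the infimum would be at least $c$, a contradiction. If $a_{1}$ is coercive, then $a_{1}\not\equiv0$ and $\int_{0}^{\Tend}a_{1}(u,u)\diff t\leq C\norm{u}_{L^{2}(0,\Tend;V)}^{2}$ by boundedness of $a_{1}$. A bound $\Dm[u]\geq c\norm{u}_{L^{2}(0,\Tend;V)}^{2}$ would therefore imply the previous one with constant $c/C$, which is impossible.

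The only delicate point is the modal factorisation and the applicability of \cref{lem:posrep} with the kernel's full representing measure. Both are immediate here because $W_{\omega}$ is a scalar function of time times a fixed vector. I therefore expect no real obstacle; the substantive estimate is already contained in \cref{lem:scalar}.
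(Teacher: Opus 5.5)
Your proposal is correct and is essentially the paper's proof. It uses the same oscillating test state $u_\omega(t)=\cos(\omega t)\,w_0$, the same modal factorisation $Z_\omega(\lambda,t)=\zeta_\omega(\lambda,t)A_1^{1/2}\nabla w_0$ inserted into \cref{lem:posrep}, the bound of \cref{lem:scalar} for the numerator, and the explicit $\cos^2$ integral for the denominator. The differences are cosmetic and both valid: you give an explicit lower bound $c_0\Tend/4$ for $\omega\geq1/\Tend$ instead of passing to the limit, and you deduce the $L^2(0,\Tend;V)$ statement from the boundedness of $a_1$ rather than computing $\norm{u_\omega}_{L^2(0,\Tend;V)}^2$ directly.
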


\begin{proof}
Choose $w_0\in V$ such that
\begin{align*}
c_0
:=
a_1(w_0,w_0)
=
\norm{A_1^{1/2}\nabla w_0}_{\Lp}^2
>0.
\end{align*}
For each $\omega>0$, we define the rapidly oscillating state as
\begin{align*}
u_{\omega}(t)
:=
\cos(\omega t)w_0,
\quad
0<t<\Tend.
\end{align*}
Its energy field is
\begin{align*}
W_{\omega}(t)
=
\cos(\omega t)A_1^{1/2}\nabla w_0.
\end{align*}
For each relaxation parameter $\lambda\geq0$, the corresponding internal variable is
\begin{align*}
Z_{\omega}(\lambda,t)
&=
\int_0^t
e^{-\lambda(t-s)}
W_{\omega}(s)\diff s
=
\zeta_{\omega}(\lambda,t)
A_1^{1/2}\nabla w_0,
\end{align*}
where $\zeta_{\omega}$ is the scalar function from \cref{lem:scalar}. Therefore, by the internal-variable representation \cref{lem:posrep},
\begin{align*}
\Dm[u_{\omega}](\Tend)
&=
c_0
\int_{[0,\infty)}
\left[
\frac12
\zeta_{\omega}(\lambda,\Tend)^2
+
\lambda
\int_0^{\Tend}
\zeta_{\omega}(\lambda,t)^2\diff t
\right]
\diff\nu(\lambda).
\end{align*}
Applying \cref{lem:scalar}, we have
\begin{align*}
0
\leq
\Dm[u_{\omega}](\Tend)
&\leq
c_0
\left[
\frac{3\nu([0,1))}{\omega^2}
+
(3+2\Tend)m(\omega)
\right]
\longrightarrow0
\quad
\text{as }\omega\to\infty.
\end{align*}
On the other hand, the instantaneous energy is
\begin{align*}
\int_0^{\Tend}
a_1\left(u_{\omega}(t),u_{\omega}(t)\right)\diff t
&=
c_0
\int_0^{\Tend}
\cos^2(\omega t)\diff t
=
c_0
\left(
\frac{\Tend}{2}
+
\frac{\sin(2\omega\Tend)}{4\omega}
\right)
\longrightarrow
\frac{c_0\Tend}{2}
>0.
\end{align*}
Therefore,
\begin{align*}
\frac{\Dm[u_{\omega}](\Tend)}
{\displaystyle
 \int_0^{\Tend}
 a_1\left(u_{\omega}(t),u_{\omega}(t)\right)\diff t}
\longrightarrow0.
\end{align*}
The quotient is non-negative by \cref{lem:posrep}. Therefore, its infimum is exactly zero, proving \eqref{eq:nogo-inf}.

The first non-coercivity assertion follows immediately from \eqref{eq:nogo-inf}. Indeed, any positive coercivity constant would provide a positive lower bound for the quotient in \cref{eq:nogo-inf}, contradicting that its infimum is zero.

Finally,
\begin{align*}
\norm{u_{\omega}}_{L^2(0,\Tend;V)}^2
&=
\norm{w_0}_V^2
\int_0^{\Tend}\cos^2(\omega t)\diff t
\longrightarrow
\frac{\Tend}{2}\norm{w_0}_V^2
>0,
\end{align*}
whereas
$\Dm[u_{\omega}](\Tend)\to0$. Therefore, no positive constant can satisfy
\begin{align*}
\Dm[u](\Tend)
\geq
c\norm{u}_{L^2(0,\Tend;V)}^2
\end{align*}
for every admissible $u$. This completes the proof.
\end{proof}

\begin{remark}[Why the standard coercive theory does not see this obstruction]
\Cref{thm:nogo} concerns the coercivity supplied by the memory term itself and is independent of $a_0$. When $a_0$ is coercive,
\begin{align*}
a_0(v,v)\geq \alpha_0\norm{v}_V^2, \quad \alpha_0>0,
\end{align*}
the required $L^2(0,\Tend;V)$-control is already provided by the instantaneous part, and the memory term may be treated as a perturbation. The failure of memory coercivity is therefore irrelevant to the standard coercive analysis. As $\alpha_0 \downarrow 0$, however, this instantaneous control degenerates. \Cref{thm:nogo} shows that the missing $L^2(0,\Tend;V)$-coercivity cannot be recovered from the positive-type memory term alone.
\end{remark}

% ============================================================
\section{The coercivity-gap index and the instantaneous limit}
\label{sec:structure}
% ============================================================
The no-go theorem shows that positive-type memory does not provide a frequency-uniform coercivity bound. We now refine this conclusion in two directions. First, the coercivity-gap index quantifies the rate at which the symbol $m(\omega)$ vanishes at high frequency and therefore measures the loss relative to $L^{2}(0,\Tend;V)$-coercivity. Second, we show that this high-frequency coercivity structure is not stable under weak-$*$ convergence of the associated time measures.

\subsection{The coercivity-gap index}

\begin{definition}[Coercivity-gap index]\label{def:index}
The \emph{coercivity-gap index} of a completely monotone kernel $k$ with symbol $m$ is
\begin{align*}
\displaystyle
\rho(k)
:=
\sup\left\{
s\geq0:
\limsup_{|\omega|\to\infty}
|\omega|^{s}m(\omega)<\infty
\right\}.
\end{align*}
\end{definition}

The index records the algebraic rate at which the memory symbol decays at high temporal frequencies. More precisely, if
\begin{align*}
\displaystyle
m(\omega)\asymp |\omega|^{-r} \quad\text{as }|\omega|\to\infty,
\end{align*}
then $\rho(k)=r$, where $f(\omega)\asymp g(\omega)$ means that there exist constants $c,C,R>0$ such that
\begin{align*}
\displaystyle
c\,g(\omega)\leq f(\omega)\leq C\,g(\omega) \quad\text{for all }|\omega|\geq R.
\end{align*}
Thus, a larger value of $\rho(k)$ corresponds to faster high-frequency decay of the symbol and therefore to a greater loss relative to frequency-uniform $L^{2}$-coercivity. One degenerate case must be excluded from range statements: for the purely constant kernel, $\nu=c\delta_0$ with $c>0$, the symbol vanishes identically for $\omega\neq0$, every $s\geq0$ belongs to the set defining $\rho(k)$, and therefore $\rho(k)=+\infty$. The index is informative precisely when $\nu((0,\infty))\neq0$, that is, for every non-constant kernel; this is the standing hypothesis of the following theorem.

\begin{theorem}[Range of the coercivity-gap index]\label{thm:index}
Let $k$ be a completely monotone kernel with a representing measure $\nu$, and assume that
\begin{align*}
\displaystyle
\nu((0,\infty))\neq0.
\end{align*}
We set
\begin{align*}
\displaystyle
M_1
:=
\int_{[0,\infty)}
\lambda\,\diff\nu(\lambda)
\in(0,\infty].
\end{align*}
Then,
\begin{enumerate}[leftmargin=2.2em,label=\textnormal{(\alph*)}]
\item
\begin{align*}
\displaystyle
\lim_{|\omega|\to\infty}
\omega^2m(\omega)
=
M_1.
\end{align*}

\item
The coercivity-gap index satisfies
\begin{align*}
\displaystyle
0\leq\rho(k)\leq2.
\end{align*}

\item
If $M_1<\infty$, then
\begin{align*}
\displaystyle
m(\omega)
\sim
M_1|\omega|^{-2}
\quad
\text{as }|\omega|\to\infty,
\end{align*}
and therefore
\begin{align*}
\displaystyle
\rho(k)=2.
\end{align*}
Furthermore,
\begin{align*}
\displaystyle
M_1=-k'(0^+).
\end{align*}

\item
If $M_1=\infty$, then
\begin{align*}
\displaystyle
\omega^2m(\omega)\longrightarrow\infty.
\end{align*}
In this case $s=2$ does not belong to the set defining $\rho(k)$, although its supremum may still be $2$.
\end{enumerate}
\end{theorem}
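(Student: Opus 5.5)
The whole theorem rests on the pointwise formula
\[
\omega^{2}m(\omega)=\int_{[0,\infty)}\frac{\lambda\omega^{2}}{\lambda^{2}+\omega^{2}}\diff\nu(\lambda),\qquad \omega\neq0,
\]
read off from \cref{eq:symbol}. For fixed $\lambda\geq0$ the integrand $\lambda\omega^{2}/(\lambda^{2}+\omega^{2})$ is non-negative, non-decreasing in $|\omega|$, and tends to $\lambda$ as $|\omega|\to\infty$ (it vanishes identically at $\lambda=0$). The monotone convergence theorem along any sequence $|\omega_n|\uparrow\infty$, combined with the monotonicity of the integrand in $|\omega|$, then gives $\omega^{2}m(\omega)\uparrow\int\lambda\diff\nu=M_{1}$ in the extended sense. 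This proves (a). The hypothesis $\nu((0,\infty))\neq0$ ensures $M_{1}>0$, and the atom at $0$ contributes nothing, which is why the constant kernel has to be excluded.

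For (c), if $M_{1}<\infty$ then $M_1>0$ as well, so (a) gives $m(\omega)\sim M_{1}|\omega|^{-2}$. Consequently $|\omega|^{s}m(\omega)\to0$ for $s<2$, stays bounded at $s=2$, and diverges like $M_{1}|\omega|^{s-2}$ for $s>2$. The set defining $\rho(k)$ is therefore $[0,2]$, and $\rho(k)=2$.

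For the identity $M_{1}=-k'(0^{+})$, I differentiate \cref{eq:bernstein} under the integral for $t>0$ to get $-k'(t)=\int\lambda e^{-\lambda t}\diff\nu(\lambda)$. This is legitimate because $\lambda e^{-\lambda t}\leq C_{\tau}e^{-\lambda\tau}$ for $t\geq2\tau$, which is $\nu$-integrable. As $t\downarrow0$ the integrand increases monotonically to $\lambda$, so monotone convergence gives $-k'(0^{+})=M_{1}$, and the argument remains valid in the extended sense.

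For (d), if $M_{1}=\infty$ then (a) gives $\omega^{2}m(\omega)\to\infty$, so $\limsup|\omega|^{2}m(\omega)=\infty$ and $s=2$ is excluded from the set.

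For (b), the lower bound $\rho\geq0$ holds because $s=0$ always belongs to the set: $m(\omega)\to0$ by \cref{lem:symbol}. This applies to locally integrable kernels; if the statement is meant for general completely monotone kernels, I would instead use that $m$ is finite and non-increasing for $|\omega|\geq1$ by the same lemma's argument. For the upper bound, take $s>2$. By (a), $\liminf\omega^{2}m(\omega)=M_{1}>0$ (possibly infinite), so $|\omega|^{s}m(\omega)=|\omega|^{s-2}\,\omega^{2}m(\omega)\to\infty$. Thus no $s>2$ belongs to the set, and $\rho(k)\leq2$.

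The only step needing care is the interchange of limit and integral in (a) and in the derivative identity. Monotonicity of the integrands in $|\omega|$ and in $t$ makes monotone convergence applicable in both cases, including when $M_{1}=\infty$, so I expect no real obstacle.
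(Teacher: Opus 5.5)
Your proposal is correct and follows the paper's proof essentially step for step. You use the identity $\omega^{2}m(\omega)=\int_{[0,\infty)}\lambda\omega^{2}(\lambda^{2}+\omega^{2})^{-1}\diff\nu(\lambda)$ with monotone convergence for (a), and \cref{lem:symbol} with the factorisation $|\omega|^{s}m(\omega)=|\omega|^{s-2}\,\omega^{2}m(\omega)$ for (b). For $M_{1}=-k'(0^{+})$ you use monotone convergence as $t\downarrow0$, and you additionally justify differentiating under the integral, which the paper leaves implicit.

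The one thing to drop is your fallback for non-locally-integrable kernels. It is unnecessary, since the paper restricts to locally integrable kernels throughout. It is also false: finiteness of $m(\omega)$ at any $\omega\neq0$ is equivalent to $\int_{[1,\infty)}\lambda^{-1}\diff\nu(\lambda)<\infty$, so outside that class $m\equiv+\infty$ and the set defining $\rho(k)$ is empty.
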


\begin{proof}
The key observation is the identity
\begin{align}\label{eq:omega2m}
\omega^2m(\omega)
=
\int_{[0,\infty)}
\frac{\lambda\omega^2}
{\lambda^2+\omega^2}
\diff\nu(\lambda).
\end{align}
Fix $\lambda\geq0$. The function
\begin{align*}
\displaystyle
r\longmapsto
\frac{\lambda r^2}{\lambda^2+r^2},
\quad r>0,
\end{align*}
is non-decreasing, and
\begin{align*}
\displaystyle
\frac{\lambda r^2}{\lambda^2+r^2}
\longrightarrow
\lambda
\quad
\text{as }r\to\infty.
\end{align*}
Because the integrand is non-negative, the monotone convergence theorem applied to \eqref{eq:omega2m} gives
\begin{align*}
\displaystyle
\lim_{|\omega|\to\infty}
\omega^2m(\omega)
=
\int_{[0,\infty)}
\lambda\,\diff\nu(\lambda)
=
M_1.
\end{align*}
This proves (a).

We prove the bounds for $\rho(k)$. By \cref{lem:symbol},
\begin{align*}
\displaystyle
m(\omega)\longrightarrow0
\quad
\text{as }|\omega|\to\infty.
\end{align*}
Therefore, $s=0$ belongs to the set in \cref{def:index}, and therefore
\begin{align*}
\displaystyle
\rho(k)\geq0.
\end{align*}
On the other hand, $M_1>0$ because $\nu((0,\infty))\neq0$. Consequently, from part (a), there exist $c>0$ and $R>0$ such that
\begin{align*}
\displaystyle
\omega^2m(\omega)\geq c
\quad
\text{for all }|\omega|\geq R.
\end{align*}
Thus, for every $s>2$,
\begin{align*}
\displaystyle
|\omega|^sm(\omega)
=
|\omega|^{s-2}\omega^2m(\omega)
\geq
c|\omega|^{s-2}
\longrightarrow\infty.
\end{align*}
No $s>2$ can therefore belong to the set defining $\rho(k)$, and
\begin{align*}
\displaystyle
\rho(k)\leq2.
\end{align*}
This proves (b).

Suppose that $M_1<\infty$. Part (a) gives
\begin{align*}
\displaystyle
\omega^2m(\omega)\longrightarrow M_1\in(0,\infty),
\end{align*}
or equivalently,
\begin{align*}
\displaystyle
m(\omega)\sim M_1|\omega|^{-2}.
\end{align*}
In particular,
\begin{align*}
\displaystyle
\limsup_{|\omega|\to\infty}
|\omega|^2m(\omega)
=
M_1<\infty.
\end{align*}
Therefore, $s=2$ belongs to the set defining $\rho(k)$. Together with $\rho(k)\leq2$, this yields
\begin{align*}
\displaystyle
\rho(k)=2.
\end{align*}
For $t>0$, differentiating the integral representation of $k$ gives
\begin{align*}
\displaystyle
-k'(t)
=
\int_{[0,\infty)}
\lambda e^{-\lambda t}\diff\nu(\lambda).
\end{align*}
As $t\downarrow0$, the integrand increases pointwise to $\lambda$. Another application of the monotone convergence theorem yields
\begin{align*}
\displaystyle
-k'(0^+)
:=
\lim_{t\downarrow0}\left(-k'(t)\right)
=
\int_{[0,\infty)}
\lambda\,\diff\nu(\lambda)
=
M_1.
\end{align*}
This proves (c).

Finally, if $M_1=\infty$, part (a) gives
\begin{align*}
\displaystyle
\omega^2m(\omega)\longrightarrow\infty.
\end{align*}
Thus, $s=2$ is not admissible in the definition of $\rho(k)$. Nevertheless, the supremum may still equal $2$, because it is possible that every $s<2$ is admissible even though $s=2$ is not. This proves (d).
\end{proof}

\begin{proposition}[Examples of the coercivity-gap index]
\label{prop:index-examples}
The following values hold.
\begin{enumerate}[leftmargin=2.2em,label=\textnormal{(\alph*)}]
\item
The exponential and Prony kernels satisfy
\begin{align*}
\displaystyle
\rho=2.
\end{align*}

\item
For the fractional kernel of order $\alpha\in(0,1)$,
\begin{align*}
\displaystyle
\rho=1-\alpha.
\end{align*}

\item
Let
\begin{align*}
\displaystyle
\diff\nu_\beta(\lambda)
=
\lambda^{-\beta}\diff\lambda
\quad
\text{on }[1,\infty),
\end{align*}
where $\beta\in(0,2)$. Then,
\begin{align*}
\displaystyle
m(\omega)
\sim
C_\beta|\omega|^{-\beta},
\quad
C_\beta
:=
\int_0^\infty
\frac{s^{1-\beta}}{1+s^2}\diff s,
\end{align*}
and therefore
\begin{align*}
\displaystyle
\rho=\beta.
\end{align*}

\item
For $\beta=2$,
\begin{align*}
\displaystyle
m(\omega)
\sim
|\omega|^{-2}\log|\omega|,
\end{align*}
and still
\begin{align*}
\displaystyle
\rho=2.
\end{align*}

\item
For
\begin{align*}
\displaystyle
\diff\nu(\lambda)
=
(\log\lambda)^{-2}\diff\lambda
\quad
\text{on }[e,\infty),
\end{align*}
one has
\begin{align*}
\displaystyle
\rho=0.
\end{align*}
\end{enumerate}
Consequently, every value in $[0,2]$ is attained within the absolutely continuous completely monotone class.
\end{proposition}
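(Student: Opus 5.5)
The plan is to treat the five cases one at a time. Each case reduces to an asymptotic evaluation of $m(\omega)=\int\lambda(\lambda^{2}+\omega^{2})^{-1}\diff\nu(\lambda)$, after which \cref{def:index} is read off directly. The final sentence of the proposition then follows by collecting the values $2$ (from (a) or (d)), $1-\alpha\in(0,1)$ (from (b)), $\beta\in(0,2)$ (from (c)) and $0$ (from (e)). All of these representing measures are absolutely continuous, except the atomic Prony case, which is only used for the value $2$; that value is also supplied by (d).

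For (a), the representing measure $\nu=\sum_{j}c_{j}\delta_{\gamma_{j}}$ has $M_{1}=\sum_{j}c_{j}\gamma_{j}<\infty$. Hence \cref{thm:index}(c) gives $\rho=2$ at once. For (b), \cref{ex:fracsymbol} already gives $m(\omega)=c_{\alpha}|\omega|^{-(1-\alpha)}$. The limsup of $|\omega|^{s}m(\omega)$ is finite exactly when $s\leq1-\alpha$, so $\rho=1-\alpha$.

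For (c), first check that $\nu_{\beta}$ is admissible: $\int_{1}^{\infty}\lambda^{-1-\beta}\diff\lambda<\infty$, so the kernel is locally integrable. Then substitute $\lambda=|\omega|s$:
\begin{align*}
m(\omega)=\int_{1}^{\infty}\frac{\lambda^{1-\beta}}{\lambda^{2}+\omega^{2}}\diff\lambda=|\omega|^{-\beta}\int_{1/|\omega|}^{\infty}\frac{s^{1-\beta}}{1+s^{2}}\diff s .
\end{align*}
The last integral increases to $C_{\beta}$ by monotone convergence. $C_{\beta}$ is finite because $1-\beta>-1$ at the origin and $1-\beta-2<-1$ at infinity. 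This gives $m\sim C_{\beta}|\omega|^{-\beta}$, and then $\rho=\beta$ as in (b).

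For (d), with $\beta=2$ the same substitution gives $|\omega|^{-2}\int_{1/|\omega|}^{\infty}s^{-1}(1+s^{2})^{-1}\diff s$. Split the integral at $s=1$. The piece over $[1/|\omega|,1]$ equals $\log|\omega|+O(1)$, and the piece over $[1,\infty)$ is bounded, so $m\sim|\omega|^{-2}\log|\omega|$. Here $M_{1}=\int_{1}^{\infty}\lambda^{-1}\diff\lambda=\infty$, so \cref{thm:index}(d) excludes $s=2$. Every $s<2$ is admissible, because $|\omega|^{s-2}\log|\omega|\to0$. Combined with \cref{thm:index}(b) this gives $\rho=2$.

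Part (e) is the main obstacle, because I need a \emph{lower} bound showing that $m$ decays slower than every power. Admissibility holds since $\int_{e}^{\infty}\lambda^{-1}(\log\lambda)^{-2}\diff\lambda=1<\infty$, and \cref{thm:index}(b) already gives $\rho\geq0$. For $|\omega|\geq e$, restrict the integral to $\lambda\geq|\omega|$ and use $\lambda^{2}+\omega^{2}\leq2\lambda^{2}$ there:
\begin{align*}
m(\omega)\geq\frac12\int_{|\omega|}^{\infty}\frac{\diff\lambda}{\lambda(\log\lambda)^{2}}=\frac{1}{2\log|\omega|}.
\end{align*}
Then, for every $s>0$, $|\omega|^{s}m(\omega)\geq|\omega|^{s}/(2\log|\omega|)\to\infty$, so no $s>0$ belongs to the set in \cref{def:index}, and $\rho=0$. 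The only care needed is to confirm the admissibility and integrability conditions for each measure, which I will state explicitly in each case.
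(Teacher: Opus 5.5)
Your proposal is correct and follows essentially the same route as the paper: \cref{thm:index} via $M_{1}<\infty$ for (a), the closed-form fractional symbol for (b), the substitution $\lambda=|\omega|s$ for (c) and (d), and the restriction to $\lambda\geq|\omega|$ with $\lambda^{2}+\omega^{2}\leq2\lambda^{2}$ for (e). The only difference is cosmetic: in (d) you split the integral at $s=1$ and invoke \cref{thm:index}(b),(d) for the endpoint. The paper instead evaluates $\int_{a}^{\infty}s^{-1}(1+s^{2})^{-1}\diff s=\tfrac12\log(1+a^{-2})$ exactly and reads off the admissible set $[0,2)$ directly.
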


\begin{proof}
The exponential and Prony kernels have $M_1<\infty$. Therefore, $\rho=2$ follows directly from \cref{thm:index}.

For the fractional kernel, \cref{ex:fracsymbol} gives
\begin{align*}
\displaystyle
m(\omega)
=
c_\alpha|\omega|^{-(1-\alpha)},
\quad
c_\alpha>0.
\end{align*}
Therefore
\begin{align*}
\displaystyle
|\omega|^sm(\omega)
=
c_\alpha|\omega|^{s-(1-\alpha)}.
\end{align*}
This quantity remains bounded as $|\omega|\to\infty$ if and only if
\begin{align*}
\displaystyle
s\leq1-\alpha.
\end{align*}
Thus,
\begin{align*}
\displaystyle
\rho=1-\alpha.
\end{align*}

Let $0<\beta<2$. We have
\begin{align*}
\displaystyle
m(\omega)
=
\int_1^\infty
\frac{\lambda^{1-\beta}}
{\lambda^2+\omega^2}
\diff\lambda.
\end{align*}
With the substitution $\lambda=|\omega|s$,
\begin{align*}
\displaystyle
|\omega|^\beta m(\omega)
=
\int_{1/|\omega|}^\infty
\frac{s^{1-\beta}}
{1+s^2}
\diff s.
\end{align*}
The function
\begin{align*}
\displaystyle
s\longmapsto
\frac{s^{1-\beta}}{1+s^2}
\end{align*}
is integrable on $(0,\infty)$, that is, near $0$ it behaves as $s^{1-\beta}$, which is integrable because $\beta<2$, and near infinity it behaves as $s^{-1-\beta}$, which is integrable because $\beta>0$. Therefore,
\begin{align*}
\displaystyle
|\omega|^\beta m(\omega)
\longrightarrow
C_\beta
:=
\int_0^\infty
\frac{s^{1-\beta}}
{1+s^2}
\diff s
\in(0,\infty).
\end{align*}
Therefore,
\begin{align*}
\displaystyle
m(\omega)\sim C_\beta|\omega|^{-\beta},
\end{align*}
and $\rho=\beta$.

For $\beta=2$,
\begin{align*}
\displaystyle
m(\omega)
=
|\omega|^{-2}
\int_{1/|\omega|}^\infty
\frac{1}{s(1+s^2)}
\diff s.
\end{align*}
A direct calculation gives
\begin{align*}
\displaystyle
\int_a^\infty
\frac{1}{s(1+s^2)}
\diff s
=
\frac12\log(1+a^{-2}).
\end{align*}
Thus,
\begin{align*}
\displaystyle
m(\omega)
=
\frac{1}{2|\omega|^2}
\log(1+\omega^2)
\sim
|\omega|^{-2}\log|\omega|.
\end{align*}
For every $0\leq s<2$, setting $\varepsilon:=2-s>0$, we have
\begin{align*}
\displaystyle
|\omega|^s m(\omega)
\sim
\frac{\log|\omega|}{|\omega|^\varepsilon}
\longrightarrow0.
\end{align*}
Therefore, any $s<2$ is admissible in \cref{def:index}. On the other hand,
\begin{align*}
\displaystyle
|\omega|^2m(\omega)
\sim
\log|\omega|
\longrightarrow\infty,
\end{align*}
so $s=2$ is not admissible, nor is any $s>2$. Therefore, the admissible set is $[0,2)$, whose supremum is $2$. Thus,
\begin{align*}
\displaystyle
\rho(k)=2,
\end{align*}
although the supremum is not attained.

Finally, we consider
\begin{align*}
\displaystyle
\diff\nu(\lambda)
=
(\log\lambda)^{-2}\diff\lambda
\quad
\text{on }[e,\infty).
\end{align*}
For $\omega>e$,
\begin{align*}
m(\omega)
&\geq
\int_\omega^\infty
\frac{\lambda}
{\lambda^2+\omega^2}
\frac{\diff\lambda}{(\log\lambda)^2}
\geq
\frac12
\int_\omega^\infty
\frac{\diff\lambda}
{\lambda(\log\lambda)^2}
=
\frac{1}{2\log\omega}.
\end{align*}
Therefore, for any $s>0$,
\begin{align*}
\displaystyle
|\omega|^sm(\omega)
\geq
\frac{|\omega|^s}{2\log\omega}
\longrightarrow\infty.
\end{align*}
Thus, no $s>0$ is admissible. Because $s=0$ is admissible by
\cref{lem:symbol}, we conclude that
\begin{align*}
\displaystyle
\rho=0.
\end{align*}
\end{proof}

The high-frequency behaviour of $m$ determines the time-regularity measured by the memory dissipation. Indeed, if $m(\omega)\asymp|\omega|^{-\rho}$, then the frequency weight in \cref{eq:freqrep} is comparable with the Sobolev weight $(1+\omega^{2})^{-\rho/2}$. For kernels of finite $L^{1}$-mass, the continuity and positivity of $m$ extend this comparison from high frequencies to the whole real line, as stated below.

\begin{remark}[Sobolev interpretation]
\label{rem:index-sobolev}
Suppose $\norm{k}_{L^{1}(0,\infty)}<\infty$ and $\nu((0,\infty))\neq0$, and that $m(\omega)\asymp|\omega|^{-\rho}$ as $|\omega|\to\infty$ for some $\rho\in(0,2]$. Since $\lambda/(\lambda^{2}+\omega^{2})\leq\lambda^{-1}$ with $\int_{(0,\infty)}\lambda^{-1}\diff\nu<\infty$, dominated convergence makes $m$ continuous on $\R$, and $\nu((0,\infty))\neq0$ makes it strictly positive; hence $\omega\mapsto m(\omega)(1+\omega^{2})^{\rho/2}$ is continuous and strictly positive, and bounded above and below on any compact interval. Combined with the high-frequency comparison $m(\omega)\asymp(1+\omega^{2})^{-\rho/2}$ (valid for large $|\omega|$ since $2^{-\rho/2}|\omega|^{-\rho}\leq(1+\omega^{2})^{-\rho/2}\leq|\omega|^{-\rho}$ there), this yields constants $c,C>0$ with
\begin{align*}
c(1+\omega^{2})^{-\rho/2}\leq m(\omega)\leq C(1+\omega^{2})^{-\rho/2},\qquad\omega\in\R,
\end{align*}
and therefore, by \cref{eq:freqrep}, $\Dm[u](\Tend)\asymp\norm{W}_{H^{-\rho/2}(\R;\Lp)}^{2}$.

For the fractional kernel the interpretation is instead homogeneous. It lies outside the finite-$L^{1}$-mass setting above, and its symbol $m(\omega)=c_{\alpha}|\omega|^{-(1-\alpha)}$ is singular at $\omega=0$; thus, whenever the frequency representation is justified in the appropriate distributional setting,
\begin{align*}
\Dm[u](\Tend)=c_{\alpha}\norm{W}_{\dot H^{-(1-\alpha)/2}(\R;\Lp)}^{2},\\
\norm{W}_{\dot H^{-(1-\alpha)/2}(\R;\Lp)}^{2}:=\frac{1}{2\pi}\int_{\R}|\omega|^{-(1-\alpha)}\norm{\widehat W(\omega)}_{\LpC}^{2}\diff\omega,
\end{align*}
with the homogeneous weight $|\omega|^{-(1-\alpha)}$ replacing the inhomogeneous $(1+\omega^{2})^{-(1-\alpha)/2}$, which remains bounded as $\omega\to0$.
\end{remark}

\begin{remark}[How the index fits with the earlier results]
\label{rem:index-organises}
The low- and high-frequency behaviour of the symbol $m$ play different roles. Its zero-frequency value
\begin{align*}
\displaystyle
m(0)=\norm{k}_{L^{1}(0,\infty)}
\end{align*}
determines the sign threshold in \cref{thm:gap}. By contrast, the coercivity-gap index $\rho(k)$ measures the rate at which $m(\omega)$ vanishes as $|\omega|\to\infty$, and therefore quantifies the high-frequency loss underlying \cref{thm:nogo}. The first moment of the representing measure provides a stronger endpoint condition:
\begin{align*}
\displaystyle
M_1<\infty
\quad\Longleftrightarrow\quad
\lim_{|\omega|\to\infty}\omega^2m(\omega)<\infty.
\end{align*}
This condition implies $\rho(k)=2$, but the converse need not hold; the same finite first-moment assumption is what a certified stability theory for the degenerate problem requires, and is taken up in the companion paper.
\end{remark}

\subsection{Weak-\texorpdfstring{$*$}{*} discontinuity at the instantaneous limit}
The distinction between distributed memory and instantaneous action is not stable under weak-$*$ convergence of the corresponding time measures. Indeed, a sequence of normalised exponential kernels may concentrate at $t=0$ and converge to an instantaneous atom, although the symbol of every kernel in the sequence still vanishes at high frequency. The following result makes this loss of uniformity explicit.

For comparison with the instantaneous limit, we extend the definition of the coercivity-gap index to a non-negative real symbol $M$ by
\begin{align*}
\displaystyle
\rho(M)
:=
\sup\left\{
s\geq0:
\limsup_{|\omega|\to\infty}
|\omega|^sM(\omega)<\infty
\right\}.
\end{align*}
In particular, the constant symbol $M\equiv1$ has index $0$.

\begin{theorem}[Concentration of exponential memory at the origin]
\label{thm:wstar}
For $n\in\mathbb N$, let
\begin{align*}
\displaystyle
k_n(t):=n e^{-nt},
\quad
\diff\mu_n(t):=k_n(t)\diff t.
\end{align*}
The representing measure of $k_n$ is
\begin{align*}
\displaystyle
\nu_n=n\delta_n.
\end{align*}
Then, the following assertions hold.
\begin{enumerate}[leftmargin=2.2em,label=\textnormal{(\alph*)}]
\item
The time measures $\mu_n$ converge weakly-$*$ to the unit atom at the origin:
\begin{align*}
\displaystyle
\mu_n\rightharpoonup^*\delta_0
\quad
\text{in }\mathcal M([0,\infty)).
\end{align*}
More precisely, $(k_n)_{n\geq1}$ is a one-sided approximate identity. Thus, for every Banach space $X$, every $1\leq p<\infty$, and every $F\in L^p(\R;X)$,
\begin{align*}
\displaystyle
k_n*F\longrightarrow F
\quad
\text{in }L^p(\R;X),
\end{align*}
where $k_n$ is extended by zero to $(-\infty,0)$. In particular, for a causal function $F$,
\begin{align*}
\displaystyle
(k_n*F)(t)
=
\int_0^t k_n(t-s)F(s)\diff s
\longrightarrow F(t)
\end{align*}
in the corresponding $L^p$-sense.

\item
The symbol of $k_n$ is
\begin{align*}
\displaystyle
m_n(\omega)
=
\frac{n^2}{n^2+\omega^2}.
\end{align*}
For every fixed $n$,
\begin{align*}
\displaystyle
m_n(\omega)\longrightarrow0
\quad
\text{as }|\omega|\to\infty,
\end{align*}
and
\begin{align*}
\displaystyle
\rho(k_n)=2.
\end{align*}
Therefore, the no-go result of \cref{thm:nogo} applies to every $k_n$.

\item
For every fixed frequency $\omega\in\R$,
\begin{align*}
\displaystyle
m_n(\omega)\longrightarrow1
\quad
\text{as }n\to\infty.
\end{align*}
The limiting function is the real Fourier symbol
\begin{align*}
\displaystyle
M_{\delta_0}(\omega)\equiv1
\end{align*}
of the instantaneous atom $\delta_0$. Under the extended definition
above,
\begin{align*}
\displaystyle
\rho(M_{\delta_0})=0.
\end{align*}

\item
The high-frequency limit and the instantaneous limit do not commute:
\begin{align*}
\displaystyle
\lim_{|\omega|\to\infty}
\lim_{n\to\infty}m_n(\omega)
=
1,
\end{align*}
whereas
\begin{align*}
\displaystyle
\lim_{n\to\infty}
\lim_{|\omega|\to\infty}m_n(\omega)
=
0.
\end{align*}
Furthermore,
\begin{align*}
\displaystyle
\lim_{n\to\infty}\rho(k_n)
=
2
\neq
0
=
\rho(M_{\delta_0}).
\end{align*}
Thus, weak-$*$ convergence of the time measures does not preserve the high-frequency coercivity structure.
\end{enumerate}
\end{theorem}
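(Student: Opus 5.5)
The proof goes through the four assertions in order; each is elementary once the representing measure $\nu_n=n\delta_n$ is identified. That identification is immediate from \cref{eq:bernstein}, since $\int e^{-\lambda t}\diff(n\delta_n)(\lambda)=ne^{-nt}$.

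For (a), I first check weak-$*$ convergence against $\varphi\in C_0([0,\infty))$ (or $C_b$). Substituting $t=\tau/n$ gives $\int_0^\infty \varphi(t)\,ne^{-nt}\diff t=\int_0^\infty\varphi(\tau/n)e^{-\tau}\diff\tau$. By dominated convergence with dominant $\norm{\varphi}_\infty e^{-\tau}$, this tends to $\varphi(0)$. For the approximate-identity statement, I use the standard argument. The kernels satisfy $k_n\geq0$ and $\norm{k_n}_{L^1}=1$, and for every $\delta>0$ the tail mass is $\int_\delta^\infty k_n=e^{-n\delta}\to0$. By Minkowski's integral inequality for Bochner integrals,
\begin{align*}
\norm{k_n*F-F}_{L^p(\R;X)}\leq\int_0^\infty k_n(s)\norm{F(\cdot-s)-F}_{L^p(\R;X)}\diff s.
\end{align*}
I then split the integral at $\delta$. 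On $(0,\delta)$ the contribution is at most $\sup_{0<s<\delta}\norm{F(\cdot-s)-F}_{L^p}$, which is small by continuity of translation in $L^p(\R;X)$ for $p<\infty$; this holds by density of simple compactly supported functions. On $(\delta,\infty)$ the contribution is at most $2\norm{F}_{L^p}e^{-n\delta}$. The causal case follows because for $F$ vanishing on $(-\infty,0)$ the convolution reduces to $\int_0^t$.

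For (b), I compute $m_n(\omega)=n\cdot n/(n^2+\omega^2)$ directly from \cref{def:symbol}. Its decay to zero is visible, or follows from \cref{lem:symbol}. The first moment is $M_1=\int\lambda\diff\nu_n=n^2<\infty$ and $\nu_n((0,\infty))=n\neq0$, so \cref{thm:index}(c) gives $\rho(k_n)=2$. The kernel is locally integrable and completely monotone, so \cref{thm:nogo} applies.

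For (c), pointwise convergence $n^2/(n^2+\omega^2)\to1$ is immediate. The symbol of $\delta_0$ is $\Re\int e^{-i\omega t}\diff\delta_0=1$. For the index, $|\omega|^s\cdot1$ is bounded at infinity only for $s=0$, so $\rho(M_{\delta_0})=0$.

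For (d), the inner limit $\lim_n m_n(\omega)=1$ holds for each $\omega$, and then the limit as $|\omega|\to\infty$ is $1$. In the other order, $\lim_{|\omega|\to\infty}m_n=0$ for each $n$, and the subsequent limit in $n$ is $0$. Finally, the sequence $\rho(k_n)\equiv2$ has limit $2\neq0$.

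The only step needing genuine care is the vector-valued approximate-identity argument in (a): Minkowski's inequality and continuity of translation must be justified for Bochner $L^p(\R;X)$ with a general Banach space $X$. I would handle this by first proving it for simple functions and then using density together with the uniform bound $\norm{k_n*F}_{L^p}\leq\norm{F}_{L^p}$ from Young's inequality. Everything else is explicit computation.
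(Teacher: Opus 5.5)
Your proposal is correct and follows the paper's proof essentially step for step: the substitution $\tau=nt$ with dominated convergence for the weak-$*$ limit; Minkowski's integral inequality, continuity of translations and the tail mass $e^{-n\delta}$ for the approximate identity; $M_{1}=n^{2}<\infty$ together with \cref{thm:index}(c) for $\rho(k_n)=2$; and direct computation for (c) and (d). Your explicit split of the Minkowski integral at $\delta$ and your check that $\nu_n((0,\infty))\neq0$ spell out details the paper leaves implicit.
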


\begin{proof}
We begin with the weak-$*$ convergence. Because $\mathcal M([0,\infty))$ is the dual of $\mathcal{C}_0([0,\infty))$, it suffices to test against an arbitrary $\varphi\in \mathcal{C}_0([0,\infty))$; the argument below uses only the boundedness and continuity of $\varphi$. With the change of variables $r=nt$, we obtain
\begin{align*}
\int_{[0,\infty)}\varphi(t)\diff\mu_n(t)
&=
\int_0^\infty \varphi(t)n e^{-nt}\diff t
=
\int_0^\infty \varphi(r/n)e^{-r}\diff r.
\end{align*}
For any fixed $r\geq0$,
\begin{align*}
\varphi(r/n)\longrightarrow\varphi(0)
\quad
\text{as }n\to\infty.
\end{align*}
Furthermore,
\begin{align*}
|\varphi(r/n)e^{-r}|
\leq
\norm{\varphi}_{L^\infty}e^{-r},
\end{align*}
and $e^{-r}\in L^1(0,\infty)$. Therefore, by the dominated convergence theorem,
\begin{align*}
\int_{[0,\infty)}\varphi(t)\diff\mu_n(t)
\longrightarrow
\varphi(0)
=
\int_{[0,\infty)}\varphi(t)\diff\delta_0(t).
\end{align*}
Therefore,
\begin{align*}
\mu_n\rightharpoonup^*\delta_0.
\end{align*}

We justify the approximate-identity statement. Extended by zero to $t<0$, $k_{n}$ satisfies $k_{n}\geq0$, $\int_{\R}k_{n}(t)\diff t=1$, and $\int_{\delta}^{\infty}k_{n}(t)\diff t=e^{-n\delta}\to0$ for every $\delta>0$, so its mass concentrates in arbitrarily small right-hand neighbourhoods of the origin; that is, $(k_{n})_{n\geq1}$ is a one-sided approximate identity. By the standard argument---Minkowski's integral inequality
\begin{align*}
\norm{k_{n}*F-F}_{L^{p}(\R;X)}\leq\int_{0}^{\infty}k_{n}(r)\norm{F(\,\cdot-r)-F}_{L^{p}(\R;X)}\diff r
\end{align*}
together with continuity of translations in $L^{p}$ and the mass estimate above---one obtains $k_{n}*F\to F$ in $L^{p}(\R;X)$ for every $F\in L^{p}(\R;X)$, $1\leq p<\infty$. For a causal $F$ this reads $(k_{n}*F)(t)=\int_{0}^{t}k_{n}(t-s)F(s)\diff s\to F(t)$. This proves~(a).

For~(b), the identity
\begin{align*}
\displaystyle
k_n(t)
=
\int_{[0,\infty)}
e^{-\lambda t}\diff\nu_n(\lambda)
\end{align*}
holds with
\begin{align*}
\displaystyle
\nu_n=n\delta_n.
\end{align*}
Therefore,
\begin{align*}
m_n(\omega)
&=
\int_{[0,\infty)}
\frac{\lambda}{\lambda^2+\omega^2}
\diff\nu_n(\lambda)
%=
%n\frac{n}{n^2+\omega^2}
=
\frac{n^2}{n^2+\omega^2}.
\end{align*}
For fixed $n$,
\begin{align*}
\displaystyle
m_n(\omega)\longrightarrow0
\quad
\text{as }|\omega|\to\infty.
\end{align*}
Furthermore, the first moment of $\nu_n$ is
\begin{align*}
\displaystyle
M_1(k_n)
=
\int_{[0,\infty)}
\lambda\diff\nu_n(\lambda)
=
n^2<\infty.
\end{align*}
Therefore, \cref{thm:index} gives
\begin{align*}
\displaystyle
\rho(k_n)=2.
\end{align*}
Because $m_n(\omega)\to0$ at high frequency, the no-go theorem applies to each $k_n$.

For~(c), fix $\omega\in\R$. Then,
\begin{align*}
\displaystyle
m_n(\omega)
=
\frac{1}{1+(\omega/n)^2}
\longrightarrow1
\quad
\text{as }n\to\infty.
\end{align*}
On the other hand,
\begin{align*}
\displaystyle
\widehat{\delta_0}(\omega)=1,
\end{align*}
so the real symbol of the limiting instantaneous atom is
\begin{align*}
\displaystyle
M_{\delta_0}(\omega)\equiv1.
\end{align*}
For this constant symbol,
\begin{align*}
\displaystyle
\limsup_{|\omega|\to\infty}
|\omega|^sM_{\delta_0}(\omega)
=
\begin{cases}
1, & s=0,\\
\infty, & s>0.
\end{cases}
\end{align*}
Thus, the extended index is
\begin{align*}
\displaystyle
\rho(M_{\delta_0})=0.
\end{align*}

Finally, part~(c) gives $m_n(\omega)\to1$ for each fixed $\omega$, and part~(b) gives $m_n(\omega)\to0$ as $|\omega|\to\infty$ for each fixed $n$. Hence the iterated limits do not commute:
\begin{align*}
\lim_{|\omega|\to\infty}\lim_{n\to\infty}m_n(\omega)=1\neq0=\lim_{n\to\infty}\lim_{|\omega|\to\infty}m_n(\omega),
\end{align*}
and likewise $\rho(k_n)=2\neq0=\rho(M_{\delta_0})$. This proves~(d).
\end{proof}

\begin{remark}[Formal instantaneous limit of the equation]
\label{rem:formallimit}
Part~(a) also identifies the limiting equation. Fix a function $u$ and a test function $v\in V$, and consider the scalar function
\begin{align*}
\displaystyle
F(t):=a_1(u(t),v),
\end{align*}
extended by zero outside $[0,\Tend]$. Whenever $F\in L^p(\R)$ for some $1\leq p<\infty$, part~(a) applied with $X=\R$ gives
\begin{align*}
\displaystyle
\int_0^t
k_n(t-s)a_1(u(s),v)\diff s
\longrightarrow
a_1(u(t),v)
\end{align*}
in the $L^p$-sense. Thus, the memory equation converges formally, and in the corresponding
weak sense, to
\begin{align*}
\displaystyle
\partial_tu+(\mathsf{A}_0+\mathsf{A}_1)u=f.
\end{align*}
The limiting contribution $\mathsf{A}_1u(t)$ is instantaneous. In particular, if $a_1$ is coercive, it provides frequency-uniform $L^2(0,\Tend;V)$-control, whereas every distributed kernel $k_n$ still satisfies the no-go theorem.
\end{remark}

\begin{remark}[A high-frequency boundary layer]
\label{rem:wstar-interp}
The representing measure of $k_n$ is
\begin{align*}
\displaystyle
\nu_n=n\delta_n.
\end{align*}
Thus, in the relaxation-rate variable $\lambda$, the mass is located at $\lambda=n$ and moves towards $\lambda=\infty$. In the physical time variable, this corresponds to concentration of
\begin{align*}
\displaystyle
k_n(t)\diff t=n e^{-nt}\diff t
\end{align*}
at $t=0$, producing the instantaneous limit $\delta_0$. The failure of uniformity is visible directly from
\begin{align*}
\displaystyle
m_n(\omega)
=
\frac{1}{1+(\omega/n)^2}.
\end{align*}
For every fixed $\omega$, the ratio $\omega/n$ tends to zero and $m_n(\omega)\to1$. However, for every fixed $n$, $m_n(\omega)\to0$ as $|\omega|\to\infty$. The transition between these two behaviours occurs at frequencies of order $n$; indeed,
\begin{align*}
\displaystyle
m_n(n\xi)=\frac{1}{1+\xi^2}.
\end{align*}
The high-frequency transition is therefore pushed farther and farther out as $n\to\infty$. Consequently, $m_n\to1$ pointwise but not uniformly on $\R$:
\begin{align*}
\displaystyle
\sup_{\omega\in\R}|m_n(\omega)-1|=1
\quad
\text{for any }n.
\end{align*}
Weak-$*$ convergence of the time measures controls fixed-frequency behaviour, but it does not control the symbol uniformly over the whole frequency axis. This is the mechanism behind the discontinuity of the coercivity scale. This discontinuity does not rule out convergence of the corresponding solutions or attractors under suitable assumptions \cite{ContiPataSquassina2006,Shikhman2026}. It shows instead that such solution convergence need not preserve the high-frequency coercivity structure selected by the memory dissipation. The value $\rho(M_{\delta_0})=0$ should not be interpreted as weak coercivity: it reflects the absence of high-frequency decay. The index orders the rate of decay within the distributed-memory class, but does not by itself compare distributed memory with instantaneous action.
\end{remark}

% ============================================================
% ============================================================
\section{Concluding remarks}
% ============================================================
This work has characterised the coercivity properties of completely monotone memory kernels through the real frequency symbol
\begin{align*}
\displaystyle
m(\omega)=\int_{[0,\infty)}\frac{\lambda}{\lambda^2+\omega^2}\,\diff\nu(\lambda),
\end{align*}
defined through the Bernstein representation and coinciding with $\Re\widehat{k}(\omega)$ in the finite-$L^{1}$-mass setting.
The exact frequency identity separates two complementary aspects of the symbol. Its zero-frequency value
\begin{align*}
\displaystyle
m(0)=\norm{k}_{L^1(0,\infty)}
\end{align*}
determines the sharp sign threshold for the coercivity gap in \cref{thm:gap}. By contrast, the decay
\begin{align*}
\displaystyle
m(\omega)\longrightarrow0
\quad
\text{as }|\omega|\to\infty
\end{align*}
holds for any locally integrable completely monotone kernel and is the mechanism behind \cref{thm:nogo}: positive-type memory cannot, by itself, provide frequency-uniform $L^2(0,\Tend;V)$-coercivity.

The coercivity-gap index
\begin{align*}
\displaystyle
\rho(k)\in[0,2]
\end{align*}
quantifies the algebraic rate of this high-frequency loss; the range $[0,2]$ holds for every non-constant kernel (\cref{thm:index}). When the symbol is comparable with an algebraic weight, the index identifies the corresponding negative-order Sobolev scale controlled by the memory dissipation. The example of \cref{thm:wstar} further shows that these high-frequency properties need not be stable under weak-$*$ convergence of the time measures.

The no-go theorem does not imply that the degenerate equation is ill-posed. It shows instead that the instantaneous energy norm is not the norm in which the memory term is coercive. The internal-variable energy identified here provides the natural ingredient for an alternative graph norm. The corresponding graph-space formulation, its stability in the vanishing-coercivity limit, and the resulting certified-discretisation programme are developed in a companion paper \cite{Ishizaka2026Graph}. In that theory, finite Bernstein moments are required for the explicit stability bounds. Weakly singular fractional kernels lie outside this finite-moment framework, although related solvability results are available within classical abstract Volterra frameworks \cite{Pruss1993,Zacher2009}. An $\alpha_{0}$-robust computable stability theory at infinite Bernstein mass remains open.

% ------------------------------------------------------------
% Acknowledgements and declarations
% ------------------------------------------------------------
\paragraph{Funding.}
The author declares that no funds, grants, or other support were received during
the preparation of this manuscript.

\paragraph{Data availability.}
No datasets were generated or analysed during the current study.

\paragraph{Competing interests.}
The author declares no competing interests.

% ------------------------------------------------------------
% Inline Springer-style references: no .bib file is required.
% Keep the entries alphabetically ordered by the first author's surname.
% ------------------------------------------------------------

\end{document}